\title{Minimum Weight Resolving Sets of Grid Graphs}
\author{Patrick Andersen%
  \thanks{Email: \texttt{pat.j.andersen@gmail.com}; Phone: \texttt{+61-412579238}; Corresponding author}
}
\author{Cyriac Grigorious %
	}
\author{Mirka Miller %
	}
\affil{School of Mathematical and Physical Sciences, The University of Newcastle, Australia}
\newtheorem{lemma}{Lemma}
\newtheorem{theorem}{Theorem}
\newtheorem{proposition}{Proposition}
\newtheorem{corollary}{Corollary}
\def\@maketitle{%
  \newpage
  \null
  \vskip 2em%
  \begin{center}%
  \let \footnote \thanks
    {\Large\bfseries \@title \par}%
    \vskip 1.5em%
    {\normalsize
      \lineskip .5em%
      \begin{tabular}[t]{c}%
        \@author
      \end{tabular}\par}%
    \vskip 1em%
    {\normalsize \@date}%
  \end{center}%
  \par
  \vskip 1.5em}
\begin{document}
\maketitle
\begin{abstract}
For a simple graph $G=(V,E)$ and for a pair of vertices $u,v \in V$, we say that a vertex $w \in V$ resolves $u$ and $v$ if the shortest path from $w$ to $u$ is of a different length than the shortest path from $w$ to $v$. A set of vertices ${R \subseteq V}$ is a resolving set if for every pair of vertices $u$ and $v$ in $G$, there exists a vertex $w \in R$ that resolves $u$ and $v$. The minimum weight resolving set problem is to find a resolving set $M$ for a weighted graph $G$ such that$\sum_{v \in M} w(v)$ is minimum, where $w(v)$ is the weight of vertex $v$. In this paper, we explore the possible solutions of this problem for grid graphs $P_n \square P_m$ where $3\leq n \leq m$. We give a complete characterisation of solutions whose cardinalities are 2 or 3, and show that the maximum cardinality of a solution is $2n-2$. We also provide a characterisation of a class of minimals whose cardinalities range from $4$ to $2n-2$. 
\end{abstract}

\section{Introduction}
Let $G=(V,E)$ be a simple graph, and for each pair of vertices $u,v \in V$, let $d(u,v)$ denote the length of the shortest path from $u$ to $v$, where ${d(u,u)=0}$   ${\forall u \in V}$ and $d(u,v) = \infty$ if $u$ and $v$ are disconnected. For two distinct vertices $u,v \in V$, a vertex $w$ is said to \textit{resolve} $u$ and $v$ if $d(w,u) \neq d(w,v)$. A set of vertices ${R \subseteq V}$ is said to be a \textit{resolving set} if for every pair of vertices $u$ and $v$ in $G$, there exists a vertex $w \in R$ that resolves $u$ and $v$. The elements of a resolving set are often called $landmarks$. For a graph $G$, a \textit{metric basis} is a resolving set of minimum cardinality, and the cardinality of a metric basis is the \textit{metric dimension} of $G$. Applications of metric bases and resolving sets arise in various settings such as network optimisation \cite{networkdiscovery}, chemistry and drug discovery \cite{chartrand2000}, robot navigation \cite{khuller}, digitisation of images \cite{melter}, and solutions to the Mastermind game \cite{chvatal}.\\
The problem of finding the metric dimension of a graph was introduced independently by Harary and Melter \cite{harary}, and Slater \cite{slater} and has been widely investigated in combinatorics literature. Khuller, Raghavachari, and Rosenfeld \cite{khuller} showed that the problem of finding the metric dimension is NP-hard for general graphs and developed a $(2 \text{ln}(n) + O(1))$ approximation algorithm. They also showed that the metric dimension of a graph is 1 iff the graph is a path and they showed that the problem is polynomial-time solvable for the case of trees. Beerliova et al. \cite{networkdiscovery} showed that no $o(\text{log}(n))$ approximation algorithm exists if $P \neq NP$. Chartrand et al. \cite{chartrand2000} proved that the only graph whose metric dimension is $|V|-1$ is $K_{|V|}$ and characterised the graphs whose metric dimension is $|V|-2$. Melter and Tomescu \cite{melter} proved that the metric dimension of grid graphs $P_n \square P_m$ is 2 and that metric bases correspond to two endpoints of a boundary edge of the grid. For more results on the metric dimensions of graphs, we refer the reader to \cite{families} and \cite{survey}.\\
We now consider a generalisation of the metric dimension problem that was first introduced by Epstein, Levin and Woeginger \cite{weightedmd} where we have a given assignment of positive weights $w(v)$ to each vertex $v \in V$. The problem is to find a \textit{minimum weight resolving set} $M \subseteq V$ such that the sum of the weights of the vertices in $M$, $\sum_{v \in M} w(v)$, is minimum. We refer to this problem as the minimum weight resolving set problem. Epstein, Levin and Woeginger showed that this problem is NP-hard for general graphs and found that the only possible solutions to the minimum weight resolving set problem correspond to \textit{minimal resolving sets} that are minimal with respect to inclusion, i.e. resolving sets $R$ where $\nexists v \in R$ such that $R-\{v\}$ is resolving. The same authors developed polynomial time algorithms for paths, trees, cycles, wheels and $k$-augmented trees (trees with an additional $k$ edges) by exhaustively enumerating the minimal resolving sets for these graphs and choosing the one with the minimum weight. As far as we are aware, these are the only graphs for which the minimum weight resolving set problem has previously been explored in the literature.\\
\textbf{Our Results}. Following the work of Epstein, Levin and Woeginger, we explore the minimum weight resolving set problem for grid graphs, $P_n \square P_m$, where $3 \leq n \leq m$. We completely characterise the minimal resolving sets of cardinality 2 and 3 for these graphs and find that for all minimal resolving sets $M$ for the grid, $2\leq |M| \leq 2n-2$. We also give a characterisation of a class of minimals whose cardinalities range between $3$ and $2n-2$ and provide a weak characterisation of a resolving set for the grid.

\section{Terminology}
Given the graph $P_n \square P_m$ where $m,n \geq 3$, if we label the vertices of $P_n$ by $u_0,u_1,\dots ,u_{n-1}$ and the vertices of $P_m$ by $v_0,v_1,\dots ,v_{m-1}$, then we have the natural labelling of the vertices of $P_n \square P_m$ where each vertex is labelled with $(u_i,v_j), i \in \{0,1,\dots n-1\} ,j \in \{0,1,\dots m-1\}$. This labelling has an obvious connection to the coordinates on the Cartesian plane, hence without loss of generality, we will refer to the first coordinate of the label as the coordinate of the vertex in the horizontal direction, and the second coordinate as the coordinate of the vertex in the vertical direction. For simplicity, we will refer to the vertex labelled by $(u_i,v_j)$ as vertex $(i,j)$. It is clear that the shortest distance between two vertices $(i,j)$ and $(k,l)$ is the Manhattan distance between the coordinates, or $|i-k| + |j-l|$. 
\\
All vertices in grid graphs have a degree of 2, 3 or 4. We give terms for each of these vertex types:
\begin{itemize}
\item Vertices of degree 2 are \textit{corner vertices}.
\item Vertices of degree 3 are \textit{side vertices}.
\item Vertices of degree 4 are \textit{interior vertices}.
\end{itemize}
The vertices of degree 2 and 3 are also known as \textit{boundary vertices} and the set of all boundary vertices is referred to as the boundary of the grid.\\
A \textit{line} in the grid is a path of length $n$ or $m$ in which either every vertex of the path has the same horizontal coordinate (a horizontal line), or every vertex of the path has the same vertical coordinate (a vertical line).\\ 
A \textit{side} of the grid is a line which contains only side vertices, except for the two endpoints of the line which are corner vertices. Two sides are adjacent if they share an endpoint and are opposite otherwise.\\
From this point onwards, we will refer to a minimal resolving set as a \textit{minimal} and we refer to a minimal of cardinality $k$ as a \textit{$k$-minimal}.
\section{Results}
We start with the characterisation of the metric bases of the grid, i.e. the $2$-minimals, given by Melter and Tomescu \cite{melter}.

\begin{theorem}[\cite{melter}]
A set $M$ of cardinality 2 is a $2$-minimal if and only if it contains two corners that share a side.
\end{theorem}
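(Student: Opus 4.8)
The plan is to first note that any resolving set of cardinality $2$ of the grid is automatically minimal: since $P_n\square P_m$ with $3\le n\le m$ is not a path, no single vertex resolves it (as recalled in the introduction, metric dimension $1$ characterises paths), so no proper subset of a two‑element resolving set is resolving. Hence it suffices to prove that a set $\{a,b\}$ with $a=(a_1,a_2)\neq b=(b_1,b_2)$ is a resolving set if and only if $a$ and $b$ are two corners sharing a side.

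For the ``if'' direction I would take $a=(0,0),\,b=(n-1,0)$, so that for $u=(i,j)$ we have $d(a,u)+d(b,u)=(n-1)+2j$ and $d(a,u)-d(b,u)=2i-(n-1)$; these recover $(i,j)$, so the map $u\mapsto(d(a,u),d(b,u))$ is injective and $\{a,b\}$ resolves. The left‑side pair $\{(0,0),(0,m-1)\}$ is handled identically with the two coordinates interchanged, and the remaining two pairs of corners sharing a side are images of these under reflections of the grid.

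For the ``only if'' direction, suppose $\{a,b\}$ is resolving. The crux is a decomposition of the grid into four ``corner rectangles'' cut out by the vertical lines $i=\min(a_1,b_1)$ and $i=\max(a_1,b_1)$ and the horizontal lines $j=\min(a_2,b_2)$ and $j=\max(a_2,b_2)$. On the bottom‑left rectangle $[0,\min(a_1,b_1)]\times[0,\min(a_2,b_2)]$ every vertex lies weakly below‑left of both $a$ and $b$, so $d(a,(i,j))=(a_1{+}a_2)-(i{+}j)$ and $d(b,(i,j))=(b_1{+}b_2)-(i{+}j)$ there; the same happens (with a reversed sign) on the top‑right rectangle, while on the bottom‑right and top‑left rectangles both distance functions depend only on $i-j$. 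Consequently, if any one of the four rectangles has at least two rows and at least two columns, it contains two distinct vertices sharing the relevant value of $i{+}j$ (resp. $i{-}j$), and $\{a,b\}$ fails to resolve them. So each rectangle must be a single row or a single column; writing these four ``thinness'' requirements as Boolean conditions on whether $\min(a_1,b_1)=0$, $\max(a_1,b_1)=n-1$, $\min(a_2,b_2)=0$, $\max(a_2,b_2)=m-1$, and solving the resulting small system, forces $\{a_1,b_1\}=\{0,n-1\}$ or $\{a_2,b_2\}=\{0,m-1\}$.

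It then remains to pin down the other coordinate. Suppose $a_1=0$ and $b_1=n-1$ (the conclusion from $\{a_2,b_2\}=\{0,m-1\}$ follows symmetrically by interchanging the coordinates). If $a_2\neq b_2$ then, after a vertical reflection if needed, we may assume $a_2<b_2$, and the vertices $(1,a_2)$ and $(0,a_2+1)$ are both at distance $1$ from $a$ and at equal distances from $b$ -- a contradiction; hence $a_2=b_2$, and if $1\le a_2\le m-2$ the pair $(0,a_2-1),(0,a_2+1)$ is unresolved, so $a_2=b_2\in\{0,m-1\}$ and $\{a,b\}$ consists of two corners sharing a horizontal side. The main obstacle is the ``only if'' direction, and within it the key idea is the four‑rectangle decomposition on which both distance functions collapse to a single quantity; once that is in place the rest is bookkeeping, the only care needed being the degenerate configurations in which some rectangles collapse (in particular when $a$ and $b$ lie on a common line).
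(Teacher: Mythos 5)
This theorem is stated in the paper as a quoted result of Melter and Tomescu and is not proved there, so there is no in-paper argument to compare against; what you have written is a correct, self-contained proof of the cited result. The ``if'' direction via the injective map $u\mapsto(d(a,u),d(b,u))$ with $a,b$ two corners of a common side is the standard computation and is fine. The ``only if'' direction is the substantive part, and your four-rectangle decomposition works: on each corner rectangle both distance functions collapse to a function of $i+j$ or of $i-j$, the thinness conditions translate into the four disjunctions $P\lor R$, $Q\lor S$, $Q\lor R$, $P\lor S$, and these do force $P\land Q$ or $R\land S$ (if, say, $P$ fails then $R$ and $S$ both hold). The final bookkeeping pinning down the second coordinate is also correct, and the degenerate case $a_1=b_1$ is handled automatically since $P\land Q$ is then impossible. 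It is worth noting that your rectangle argument is in the same spirit as, and essentially subsumes, the quadrant-based Lemma~\ref{quadlem} that the paper develops later for $k\ge 3$ (two landmarks weakly in a common quadrant of a vertex leave two of its neighbours unresolved); your version is sharper in that it also exploits the $i-j$ collapse on the two ``mixed'' rectangles, which is what lets you close the argument with only two landmarks. So the proposal is correct and, relative to the paper, supplies a proof where the paper only supplies a citation.
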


\begin{figure}[H]
        \centering
        \includegraphics[width=0.35\textwidth]{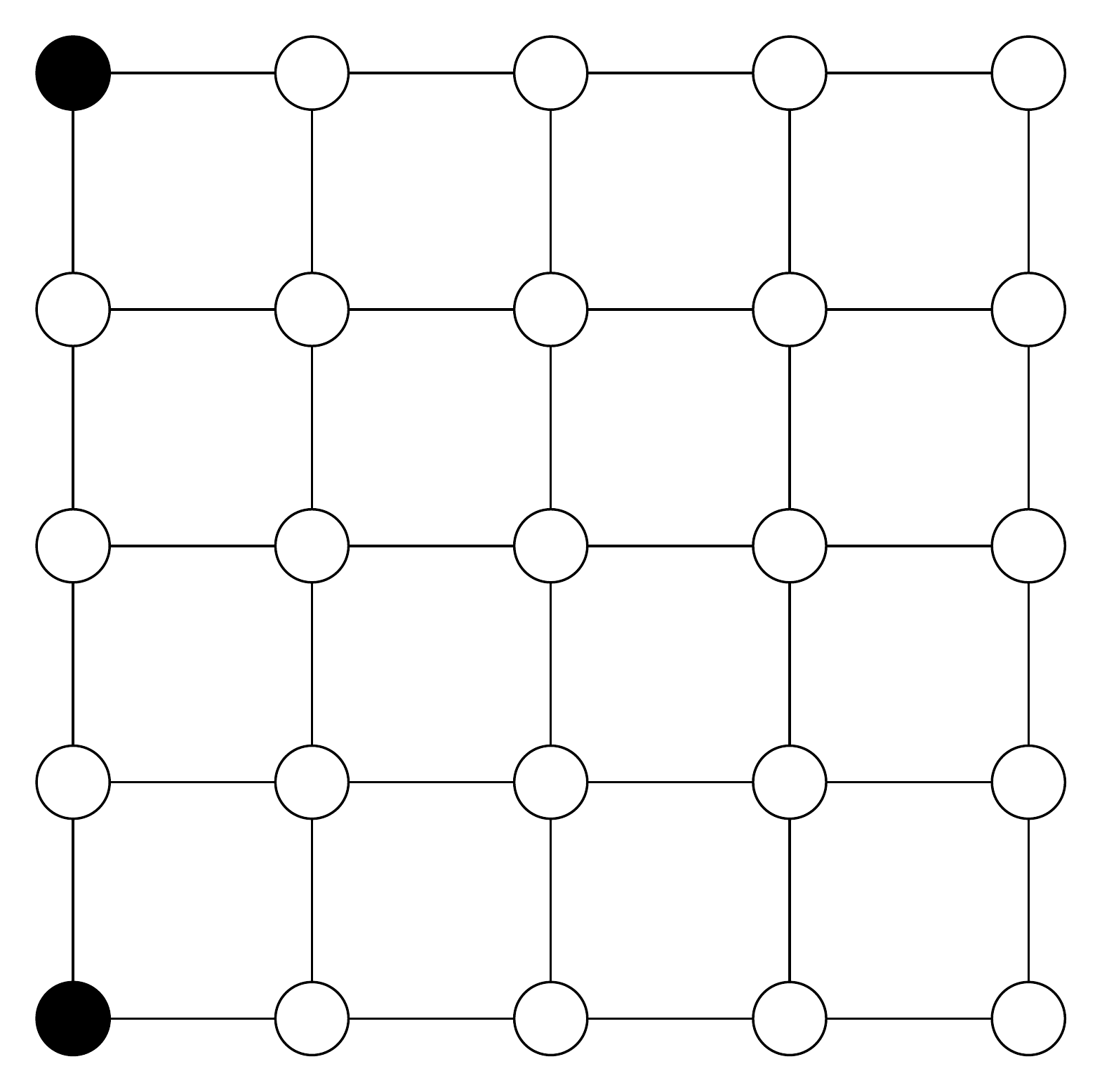}
        \caption{An example of a metric basis where the basis elements are in black.}
\end{figure}

We now attempt to characterise all the $3$-minimals. The previous theorem gives us an important property of all minimals whose cardinality is greater than 2.

\begin{proposition} \label{cornerprop}
All $k$-minimals, where $k \geq 3$, do not contain more than one corner vertex. 
\end{proposition}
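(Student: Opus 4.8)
The plan is to argue by contradiction. Suppose $M$ is a $k$-minimal with $k \geq 3$ that nonetheless contains two distinct corner vertices $c_1$ and $c_2$. The key structural observation is that any two distinct corners of $P_n \square P_m$ either lie on a common side or are opposite, so the argument splits into exactly these two cases. In each case the goal is to exhibit a vertex of $M$ whose removal leaves a resolving set, contradicting minimality.

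In the first case $c_1$ and $c_2$ share a side, so by the previous theorem the pair $\{c_1,c_2\}$ is already a resolving set. Since $|M|\geq 3$, there is some vertex $v \in M \setminus \{c_1,c_2\}$, and then $\{c_1,c_2\} \subseteq M \setminus \{v\}$. Because any superset of a resolving set is resolving, $M \setminus \{v\}$ resolves the grid, contradicting the minimality of $M$. This case is essentially immediate once the structural dichotomy is in place.

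In the second case $c_1$ and $c_2$ are opposite corners; by symmetry (reflecting the grid) it suffices to treat $c_1 = (0,0)$ and $c_2 = (n-1,m-1)$. Using that distance in the grid is the Manhattan distance, a vertex $(i,j)$ is resolved from $(k,l)$ by $c_1$ precisely when $i+j \neq k+l$, and by $c_2$ precisely when $(n-1-i)+(m-1-j) \neq (n-1-k)+(m-1-l)$, which simplifies to the same condition $i+j \neq k+l$. Hence $c_1$ and $c_2$ resolve exactly the same pairs of vertices, so every pair that $c_2$ resolves is already resolved by $c_1 \in M \setminus \{c_2\}$; therefore $M \setminus \{c_2\}$ is still a resolving set, contradicting minimality.

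The only genuinely non-routine step is the opposite-corner case: one has to notice that two opposite corners are ``metrically redundant'' with respect to resolving, which comes from the constant-sum identity $d((i,j),c_1) + d((i,j),c_2) = (n-1)+(m-1)$ for all vertices $(i,j)$. Everything else is bookkeeping with the definition of a minimal resolving set and the fact that enlarging a resolving set keeps it resolving.
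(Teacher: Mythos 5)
Your proof is correct and follows essentially the same two-case structure as the paper: corners sharing a side reduce to the metric-basis theorem, and opposite corners are shown to be redundant so that one can be deleted. The one place you genuinely improve on the paper is the justification of the opposite-corner case: the paper appeals to the grid being ``symmetric about its diagonals,'' which does not literally hold when $n \neq m$, whereas your constant-sum identity $d\bigl((i,j),c_1\bigr) + d\bigl((i,j),c_2\bigr) = (n-1)+(m-1)$ gives an airtight proof that $c_1$ and $c_2$ resolve exactly the same pairs. No gaps.
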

\begin{proof}
Suppose we had a minimal $M$ such that $|M| \geq 3$. If $M$ contains two corner vertices on the same side, then a metric basis $B$ is a proper subset of $M$. Since no minimal is the proper subset of another minimal and $B$ is a 2-minimal, this leads to a contradiction.\\
Now suppose $M$ contains two corners $u,v$ that are not on the same side (opposite corners). Since $v$ is the only vertex that has distance $(n-1)+(m-1)$ from $u$, $v$ is resolved by $u$. Furthermore, since the grid is symmetric about its diagonals, every pair of vertices that are not resolved by $u$ will not be resolved by $v$ either. Hence if $M$ is a resolving set, then $M-v$ is also a resolving set, which implies that $M$ is not a minimal.
\end{proof}

This condition is necessary but not sufficient for a $3$-minimal. In order to see this, consider the following lemma and its corollary:

\begin{lemma}
If two vertices $u,v$ are not on the same line, then there exist two shortest paths from $u$ to $v$ of the form $u,\dots ,w_1,v$ and $u,\dots ,w_2,v$ where $w_1~\neq~w_2$.
\end{lemma}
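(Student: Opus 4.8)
The plan is to argue directly from coordinates. Write $u=(i,j)$ and $v=(k,l)$. The hypothesis that $u$ and $v$ do not lie on a common line means exactly that $i\neq k$ and $j\neq l$, so $d(u,v)=|i-k|+|j-l|$ with \emph{both} summands nonzero. The strategy is to exhibit two distinct neighbours of $v$, call them $w_1$ and $w_2$, each at distance $d(u,v)-1$ from $u$, and then to build a shortest $u$--$v$ path through each by prepending a shortest $u$--$w_t$ path to the edge $w_tv$.

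First I would define $w_1$ to be the neighbour of $v$ obtained by moving one step horizontally toward $u$: $w_1=(k+1,l)$ if $i>k$ and $w_1=(k-1,l)$ if $i<k$. Because $i\neq k$, this coordinate shift stays within $\{0,\dots,n-1\}$, so $w_1$ is a genuine vertex of the grid adjacent to $v$. Symmetrically, let $w_2$ be the neighbour of $v$ obtained by moving one step vertically toward $u$: $w_2=(k,l+1)$ if $j>l$ and $w_2=(k,l-1)$ if $j<l$; since $j\neq l$ this is again a valid vertex adjacent to $v$. Clearly $w_1\neq w_2$, since $w_1$ differs from $v$ only in its horizontal coordinate while $w_2$ differs from $v$ only in its vertical coordinate.

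Next I would compute the distances. For $w_1$ the horizontal gap to $u$ has decreased by one while the vertical gap is unchanged, so $d(u,w_1)=(|i-k|-1)+|j-l|=d(u,v)-1$; the same computation gives $d(u,w_2)=|i-k|+(|j-l|-1)=d(u,v)-1$. Hence taking any shortest path from $u$ to $w_1$ and appending the edge $w_1v$ yields a walk of length $d(u,v)$, which must therefore be a shortest $u$--$v$ path, and it has the form $u,\dots,w_1,v$. The analogous construction with $w_2$ gives a shortest $u$--$v$ path of the form $u,\dots,w_2,v$, and $w_1\neq w_2$ completes the proof.

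There is essentially no deep obstacle here; the only point requiring a moment's care is the verification that $w_1$ and $w_2$ actually lie inside the grid, which is precisely where the hypothesis $i\neq k$ and $j\neq l$ is used (it guarantees there is ``room'' to step toward $u$ in each coordinate direction). One could alternatively phrase the whole argument by choosing, at the $v$-end, between a horizontal and a vertical step of a monotone lattice path, but the coordinate bookkeeping above is the cleanest route.
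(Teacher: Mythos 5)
Your proof is correct and follows essentially the same idea as the paper's: both arguments produce the two distinct penultimate vertices as the horizontal and the vertical neighbour of $v$ on the side toward $u$, the paper by describing the two monotone L-shaped paths explicitly and you by the equivalent distance computation $d(u,w_t)=d(u,v)-1$. Your version is if anything slightly more careful, since you verify that $w_1$ and $w_2$ lie inside the grid.
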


\begin{proof}
If $u$ and $v$ are not on the same line then they differ in both horizontal and vertical position.\\
In one possible shortest path, we traverse the horizontal line from $u$ to a vertex $q_1$ which has the same horizontal position as $v$, and then traverse the vertical line from $q_1$ to $v$. In this case, the second last vertex of this path, $w_1$, will have the same horizontal coordinate as $v$ but a different vertical coordinate.\\
In another possible shortest path, we traverse the vertical line from $u$ to a vertex $q_2$ which has the same vertical position as $v$, and then traverse the horizontal line from $q_2$ to $v$. In this case, the second last vertex of this path, $w_2$, will have the same vertical coordinate as $v$ but a different horizontal coordinate.\\
It is therefore clear that $w_1 \neq w_2$.
\end{proof}

\begin{corollary} \label{c_1}
If two vertices $u,v$ are not on the same line, then there are two neighbours of $v$ that are not resolved by $u$.
\end{corollary}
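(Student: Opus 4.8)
The plan is to derive this directly from the preceding lemma. Apply the lemma to the pair $u,v$: since $u$ and $v$ are not on the same line, we obtain two distinct vertices $w_1 \neq w_2$, each of which is the second-to-last vertex of a shortest path from $u$ to $v$. By construction each $w_i$ is adjacent to $v$, so $w_1$ and $w_2$ are two distinct neighbours of $v$.

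Next I would observe that lying on a shortest $u$–$v$ path forces a distance equality. Because the path $u,\dots,w_i,v$ is a shortest path of length $d(u,v)$, its prefix $u,\dots,w_i$ is a shortest path of length $d(u,v)-1$, hence $d(u,w_i) = d(u,v) - 1$ for $i = 1,2$. Therefore $d(u,w_1) = d(u,w_2)$, which is precisely the statement that $u$ does not resolve the pair $\{w_1,w_2\}$. Combining the two observations, $w_1$ and $w_2$ are two neighbours of $v$ that are not resolved by $u$, as claimed.

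There is essentially no obstacle here: the corollary is an immediate repackaging of the lemma together with the elementary fact that any subpath of a shortest path is itself shortest. The only point worth stating carefully is that the $w_i$ produced by the lemma are genuinely neighbours of $v$ (which is explicit in the lemma's construction, since each is a second-to-last vertex on a path ending at $v$) and genuinely distinct (also guaranteed by the lemma). One could alternatively give a self-contained argument by exhibiting the two neighbours of $v$ explicitly — the one differing from $v$ in the horizontal coordinate toward $u$, and the one differing in the vertical coordinate toward $u$ — and checking the Manhattan-distance computation, but invoking the lemma is cleaner.
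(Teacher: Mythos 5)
Your proof is correct and follows exactly the route the paper intends: the corollary is stated without proof precisely because it follows immediately from the lemma via the observation that each $w_i$, being the second-to-last vertex of a shortest $u$--$v$ path, is a neighbour of $v$ at distance $d(u,v)-1$ from $u$. Your explicit justification that $d(u,w_1)=d(u,w_2)$ via the subpath-of-a-shortest-path argument is the right way to fill in the one step the paper leaves implicit.
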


If we consider a set $S$ of vertices that contains a corner and its two neighbours, then this will satisfy the conditions of Proposition~\ref{cornerprop}, however $S$ is clearly not resolving since the opposite corner of the one in $S$ is not on the same line as any of the vertices in $S$. We can use Corollary~\ref{c_1} to get another property of the $k$-minmals where $k \geq 3$.

\begin{proposition} \label{opprop}
All minimals must contain two boundary vertices on opposite sides.
\end{proposition}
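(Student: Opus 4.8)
The plan is to argue by contradiction using Corollary~\ref{c_1}; in fact the argument establishes the slightly stronger fact that \emph{every} resolving set of the grid contains two boundary vertices on opposite sides, so minimality will play no role. First I would reformulate the conclusion. Since a side is a boundary line together with its two corner endpoints, every vertex of such a line is a boundary vertex, so ``$M$ contains a boundary vertex on the side $S$'' is the same as ``some vertex of $M$ lies on the line forming $S$''; I'll say $M$ \emph{meets} $S$ in that case. Because no single vertex lies on two opposite sides, ``$M$ contains two boundary vertices on opposite sides'' is equivalent to ``$M$ meets both sides of some opposite pair.'' So I assume for contradiction that $M$ meets no opposite pair of sides; then $M$ fails to meet at least one of the two horizontal sides and at least one of the two vertical sides.

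Next, using the horizontal and vertical reflection automorphisms of $P_n \square P_m$, I would reduce to the case that $M$ meets neither the top side $\{(i,m-1):0\le i\le n-1\}$ nor the right side $\{(n-1,j):0\le j\le m-1\}$. Let $c=(n-1,m-1)$ be the top-right corner. The only two lines through $c$ are the top line and the right line, and by assumption no vertex of $M$ lies on either of them; hence no vertex of $M$ shares a line with $c$.

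Now I would apply Corollary~\ref{c_1} with $v=c$ and $u=w$ for each $w\in M$: since $w$ and $c$ are never on a common line, $c$ has two neighbours not resolved by $w$, and since $c$ has exactly the two neighbours $(n-2,m-1)$ and $(n-1,m-2)$, this forces $d(w,(n-2,m-1))=d(w,(n-1,m-2))$ for every $w\in M$. Therefore the pair $\{(n-2,m-1),(n-1,m-2)\}$ is resolved by no vertex of $M$, contradicting that $M$ is a resolving set. (If one prefers to bypass the corollary, the same conclusion drops out of a one-line Manhattan-distance computation: for any $w=(x,y)$ with $x\le n-2$ and $y\le m-2$, both of these distances equal $n+m-3-x-y$.)

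I do not expect a serious obstacle here; the only points needing a little care are the reformulation step — checking that ``meeting an opposite pair of sides'' is genuinely equivalent to the stated conclusion, which rests on the fact that a corner lies on two sides but no vertex lies on two \emph{opposite} sides — and the verification that the four cases remaining after the reduction really are interchangeable via the grid's reflection automorphisms. Everything past that is immediate from Corollary~\ref{c_1}.
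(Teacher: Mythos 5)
Your proof is correct and rests on exactly the same key fact as the paper's: if no vertex of $M$ shares a line with some corner $c$, then by Corollary~\ref{c_1} the two neighbours of $c$ form a pair unresolved by all of $M$. The paper reaches this via a case analysis on how many boundary vertices $M$ contains (zero, one, or two not on opposite sides), whereas you negate the conclusion directly and use the reflection symmetries to pin down a single offending corner --- a cleaner packaging of the same argument, but not a genuinely different route.
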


\begin{proof}
Suppose we have a set $M \subset V$ such that $M$ does not contain any boundary vertices. This implies that $M$ can only contain interior vertices. Since no interior vertices are on the same line as a corner vertex, by Corollary \ref{c_1}, for all interior vertices $u$ and a particular corner $c$, there exist two neighbours of $c$, say $w_1$ and $w_2$, that are not resolved by $u$. Since $c$ only has two neighbours, the pair \{$w_1$,$w_2$\} is the same for all $u$, hence this pair of vertices is not resolved by any interior vertex and thus $M$ is not a resolving set.\\
If we add a single boundary vertex $b$ to $M$, then there is at least one corner that is not on the same line as $b$. Hence $M \cup \{b\}$ is not a resolving set.\\
If we add two boundary vertices $b_1,b_2$ to $M$ where $b_1$ and $b_2$ are not on opposite sides, then there are three possibilities:
\begin{enumerate}[(i)]
\item $b_1$ and $b_2$ are two side vertices on the same side.
\item $b_1$ and $b_2$ are two side vertices on adjacent sides.
\item One of $b_1$ and $b_2$ is a side vertex and the other is a corner vertex on the same side.
\end{enumerate}
In all three possibilities, there is still at least one corner that is not on the same line as either $b_1$ or $b_2$. Hence $M \cup \{b_1,b_2\}$ is not a resolving set.\\
Therefore, all resolving sets for the grid must contain two boundary vertices on opposite sides.
\end{proof}

There is one last property of $3$-minimals that we need in order to get a characterisation. In order to arrive at this property we need the following lemmas.

\begin{lemma} \label{quadlem}
Suppose we have a set of vertices $\{ (x_1,y_1),(x_2,y_2),\dots ,(x_k,y_k) \}$ and another vertex $(p,q)$ such that:
\begin{center}
$p<x_i \quad \forall i \in \{1,2, \dots ,k\} \quad \text{or} \quad p>x_i \quad \forall i \in \{1,2, \dots ,k\}$\\
$\quad \text{and} \quad$\\
$q<y_i \quad \forall i \in \{1,2, \dots ,k\} \quad \text{or} \quad q>y_i \quad \forall i \in \{1,2, \dots ,k\}.$
\end{center}
Then there exist two neighbours of $(p,q)$, denoted by $(p^*,q)$ and $(p,q^*)$, that are not resolved by any of the vertices $(p,q),(x_1,y_1),(x_2,y_2),\dots ,(x_k,y_k)$.
\end{lemma}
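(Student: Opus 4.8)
The plan is to exploit the reflective symmetry of the grid to reduce to one representative configuration, and then to exhibit the two neighbours explicitly and verify non-resolution by a one-line Manhattan-distance computation. The hypotheses single out $(p,q)$ as an ``extreme'' vertex: its horizontal coordinate lies strictly on one side of every $x_i$, and its vertical coordinate strictly on one side of every $y_i$. Since reflecting $P_n \square P_m$ in a central horizontal or vertical line is a distance-preserving automorphism that merely interchanges the two inequality options, I would assume without loss of generality that $p < x_i$ and $q < y_i$ for all $i$; the remaining three cases follow by applying the appropriate reflection.

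Under this assumption I would take the two neighbours pointing \emph{towards} the cluster, namely $(p^*,q) := (p+1,q)$ and $(p,q^*) := (p,q+1)$. These are genuine vertices of the grid: because some $x_i$ exceeds $p$ and every horizontal coordinate lies in $\{0,\dots,n-1\}$, we have $p \leq n-2$, so $(p+1,q)$ exists, and similarly $q \leq m-2$ gives the existence of $(p,q+1)$. It then remains to check that no vertex in $\{(p,q),(x_1,y_1),\dots,(x_k,y_k)\}$ resolves this pair. The vertex $(p,q)$ does not, since $d\big((p,q),(p+1,q)\big) = d\big((p,q),(p,q+1)\big) = 1$. For each $i$, integrality together with $x_i > p$ and $y_i > q$ gives $x_i \geq p+1$ and $y_i \geq q+1$, hence
\[
d\big((x_i,y_i),(p+1,q)\big) = (x_i - p - 1) + (y_i - q) = (x_i - p) + (y_i - q - 1) = d\big((x_i,y_i),(p,q+1)\big),
\]
so $(x_i,y_i)$ does not resolve the pair either, and the lemma follows.

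There is no deep obstacle here; the only subtlety is the choice of the two neighbours. One must move inward in \emph{both} coordinates: a mixed choice such as $(p+1,q)$ and $(p,q-1)$ would flip the sign of one term and destroy the equality of distances, while an ``outward'' choice need not exist, since $p$ or $q$ may equal $0$. The strict inequalities in the hypothesis are precisely what guarantee that the inward neighbours $(p+1,q)$ and $(p,q+1)$ are available, which is why the statement is phrased with strict rather than non-strict inequalities.
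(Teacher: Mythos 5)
Your proof is correct, and it identifies exactly the same pair of neighbours as the paper does: the two neighbours of $(p,q)$ that point ``inward'' toward the cluster $\{(x_i,y_i)\}$. The only difference is in how non-resolution is verified. The paper applies Corollary~\ref{c_1} to each $(x_i,y_i)$ separately — obtaining for each one a horizontal and a vertical unresolved neighbour of $(p,q)$ — and then uses the quadrant hypothesis to argue that these pairs coincide across all $i$. You instead reduce to one sign configuration by symmetry and check directly that $d\bigl((x_i,y_i),(p+1,q)\bigr) = d\bigl((x_i,y_i),(p,q+1)\bigr) = (x_i-p)+(y_i-q)-1$ via the Manhattan metric. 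Your version is self-contained and arguably cleaner, since it does not lean on the earlier two-shortest-paths lemma, and it also makes explicit two points the paper glosses over: that the inward neighbours actually exist as grid vertices (which is where the strictness of the inequalities and the nonemptiness of the set are used), and that a mixed or outward choice of neighbours would fail. Both arguments are sound; the underlying geometric content is identical.
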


\begin{proof}
Since no vertex in $\{ (x_1,y_1),(x_2,y_2),\dots ,(x_k,y_k) \}$ is on the same line as $(p,q)$, then by Corollary~\ref{c_1}, for each $(x_i,y_i) \in \{ (x_1,y_1),(x_2,y_2),\dots ,(x_k,y_k) \}$ there are two neighbours of $(p,q)$, a horizontal neighbour $(p^*_i,q)$ and a vertical neighbour $(p,q^*_i)$, that are not resolved by $(x_i,y_i)$.\\ 
However since either $p~<~x_i$,  $\forall~i~\in~\{1,2, \dots ,k\}$, or $p~>~x_i$,  $\forall~i~\in~\{1,2, \dots ,k\}$,
\begin{center}
$(p^*_1,q)~=~(p^*_2,q)~=~\dots~=~(p^*_k,q)~=~(p^*,q)$.\\
\end{center} 
And since either $q~<~y_i$,  $\forall~i~\in~\{1,2, \dots ,k\}$, or $q~>~y_i$,  $\forall~i~\in~\{1,2, \dots ,k\}$, 
\begin{center}
$(p,q^*_1)~=~(p,q^*_2)~=~\dots~=~(p,q^*_k)~=~(p,q^*)$.\\
\end{center}
Therefore, $(p^*,q)$ and $(p,q^*)$ are not resolved by any
$(x_i,y_i) \in \{ (x_1,y_1), (x_2,y_2),\\ \dots , (x_k,y_k) \}$. And since $(p,q)$ does not resolve any of its neighbours, $(p^*,q)$ and $(p,q^*)$ are not resolved by $(p,q)$.
\end{proof}

The statement in Lemma~\ref{quadlem} is equivalent to saying that if we make $(p,q)$ the origin of coordinate axes with the $x$-axis being the line $y = q$ and the $y$-axis being the line $x = p$, then if all the vertices $(x_1,y_1),(x_2,y_2),\dots ,(x_k,y_k)$ are in the same quadrant with respect to $(p,q)$ as the origin, then the set\\
$\{ (x_1,y_1),(x_2,y_2),\dots ,(x_k,y_k),(p,q) \}$ is not resolving.\\

\begin{figure}[H]
        \centering
        \includegraphics[width=0.5\textwidth]{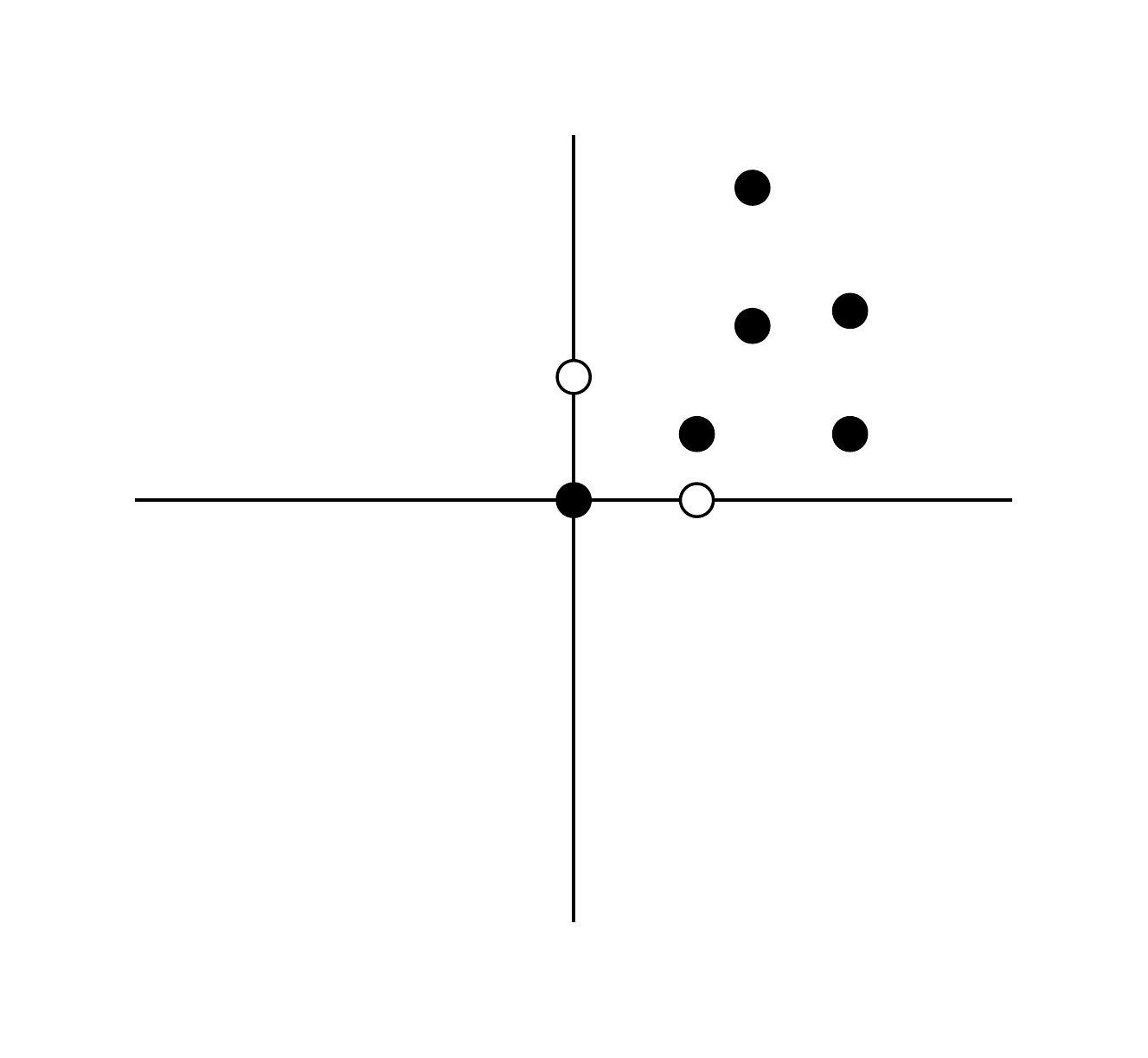}
        \caption{The situation described in Lemma~\ref{quadlem} where the white vertices are not resolved by any of the black vertices.}
\end{figure}

We can use this lemma to achieve the following result that is specific to $3$-minimals. 

\begin{lemma}\label{linelem}
A 3-minimal must have at least two vertices on the same line.
\end{lemma}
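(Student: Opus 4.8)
The plan is to argue by contradiction: suppose $M = \{a, b, c\}$ is a $3$-minimal in which no two of the three vertices lie on a common line. I would then show that $M$ fails to be resolving by producing a pair of unresolved vertices, which contradicts the assumption that $M$ is a resolving set. The main tool will be Lemma~\ref{quadlem}: if I can find a vertex $(p,q)$ of the grid such that $a$, $b$, and $c$ all lie in the same closed quadrant with respect to $(p,q)$ as origin (strictly, with $p$ strictly less than or strictly greater than all three horizontal coordinates, and similarly for $q$), then two neighbours of $(p,q)$ are unresolved by $M \cup \{(p,q)\}$, and in particular by $M$ (since $(p,q)$ is not one of $a,b,c$, as $(p,q)$ shares a line with none of them). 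So the whole proof reduces to locating such a witness vertex $(p,q)$.

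The key step is therefore the following observation. Write $a=(x_1,y_1)$, $b=(x_2,y_2)$, $c=(x_3,y_3)$. Since no two of them share a line, the three horizontal coordinates $x_1,x_2,x_3$ are pairwise distinct, and likewise the three vertical coordinates $y_1,y_2,y_3$ are pairwise distinct. I would consider a corner of the grid, say $(0,0)$. If all of $x_1,x_2,x_3$ are positive and all of $y_1,y_2,y_3$ are positive, then $(0,0)$ itself serves as the witness $(p,q)$ and we are done. The remaining cases are when some vertex of $M$ lies on a side line through $(0,0)$ — but "on the same line as the corner" only rules out $(p,q)$ being that corner; here we may instead slide the origin. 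More carefully: because the $x$-coordinates are distinct, at most one of them is $0$; because the $y$-coordinates are distinct, at most one of them is $0$. I would pick the corner $(p_0,q_0)$ (one of the four) so as to avoid these, and then, if necessary, shift to an adjacent grid vertex $(p,q)$ still having all of $a,b,c$ in one quadrant. The existence of such a vertex follows from a short case analysis on which of the three vertices, if any, has an extreme coordinate $0$ or $n-1$ or $m-1$; in every case there is enough room (using $n,m \ge 3$) to place $(p,q)$ strictly to one side of all three horizontal coordinates and strictly to one side of all three vertical coordinates.

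The main obstacle I anticipate is the boundary bookkeeping in this case analysis: a vertex of $M$ sitting on an edge or corner of the grid can block the "obvious" choice of $(p,q)$, and one has to check that $n,m\ge 3$ always leaves a usable vertex. I expect this to be handled by noting that among the four corners of the grid, the "blocking" conditions (a coordinate of some $a,b,c$ equal to $0$, $n-1$, $m-1$) can be violated by at most a bounded number of them because the coordinates within each direction are distinct, so at least one corner — possibly after a one-step shift inward along the boundary — works. Once $(p,q)$ is fixed, Lemma~\ref{quadlem} immediately yields the two unresolved neighbours $(p^*,q)$ and $(p,q^*)$, contradicting that $M$ resolves the grid, and the proof is complete.
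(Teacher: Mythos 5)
Your overall strategy --- derive a contradiction by exhibiting a vertex $(p,q)$ to which Lemma~\ref{quadlem} applies --- is the right one, but the specific witness you look for does not always exist, and this is a genuine gap. You require a grid vertex $(p,q)$ with $p$ strictly to one side of \emph{all three} horizontal coordinates and $q$ strictly to one side of \emph{all three} vertical coordinates, i.e.\ all of $a,b,c$ inside a single open quadrant. Take $M=\{(0,m-1),\,(x,y),\,(n-1,0)\}$ with $0<x<n-1$ and $0<y<m-1$ (for instance $\{(0,4),(2,2),(4,0)\}$ in $P_5\square P_5$): no two of these share a line, yet the horizontal coordinates $\{0,x,n-1\}$ span the entire range $0,\dots,n-1$, so no vertex of the grid satisfies $p<x_i$ for all $i$ or $p>x_i$ for all $i$; the same happens vertically. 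No amount of sliding from a corner can repair this, because the obstruction is not a single coordinate coinciding with the corner's but the landmarks straddling the whole grid. So your case analysis cannot close.

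The repair --- and this is what the paper does --- is to take $(p,q)$ to be one of the three landmarks itself, which Lemma~\ref{quadlem} explicitly permits since its conclusion includes $(p,q)$ among the vertices that fail to resolve the two neighbours. Order the vertices so that $x_1<x_2<x_3$ and examine which of the three holds the middle $y$-value: in every case at least one of the two $x$-extreme landmarks has the other two landmarks strictly inside one of its open quadrants (e.g.\ if $y_1<y_2<y_3$, then relative to $(x_1,y_1)$ both other landmarks have larger $x$ and larger $y$), and Lemma~\ref{quadlem} then produces two neighbours of that landmark unresolved by all of $M$. This avoids all boundary bookkeeping because the origin is guaranteed to be a vertex of the grid, and it only needs \emph{two} points confined to a quadrant rather than three.
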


\begin{proof}
Suppose we have a set $M = \{(x_1,y_1),(x_2,y_2),(x_3,y_3)\}$ where no two vertices are on the same line, i.e.m $x_1 \neq x_2 \neq x_3$ and $y_1 \neq y_2 \neq y_3$. Without loss of generality, we let $x_1 < x_2 < x_3$. Now we have $y_i < y_j < y_k$, where $i,j,k \in \{1,2,3\}$ and $i\neq j \neq k$.\\
If we let $j=1$, then we have either $y_3<y_1<y_2$ or $y_2<y_1<y_3$. In either case, by Lemma~\ref{quadlem}, there are two neighbours of $(x_3,y_3)$ that are not resolved by any vertex in $M$. Therefore, $M$ is not a resolving set.\\
If we let $j=2$, then we have either $y_1<y_2<y_3$ or $y_3<y_2<y_1$. In either case, by Lemma~\ref{quadlem}, there are two neighbours of $(x_1,y_1)$ and two neighbours of $(x_3,y_3)$ that are not resolved by any vertex in $M$. Therefore, $M$ is not a resolving set.\\
Finally, if we let $j=3$, then we have either $y_1<y_3<y_2$ or $y_2<y_3<y_1$. In either case, by Lemma~\ref{quadlem}, there are two neighbours of $(x_1,y_1)$ that are not resolved by any vertex in $M$. Therefore, $M$ is not a resolving set if it contains any three vertices that are not on the same line. 
\end{proof}

We can now get the final property of $3$-minimals.

\begin{proposition}\label{lineprop}
A 3-minimals has either:
\begin{enumerate}[(i)]
\item Two vertices on the same line, $(i,j)$ and $(k,j)$, where  $i \neq k$, and a third vertex $(p,q)$, where $i\leq p \leq k$ and $q \neq j$. 
\item Two vertices on the same line, $(i,j)$ and $(i,k)$, where $j \neq k$, and a third vertex $(p,q)$, where $j\leq p \leq k$ and $p \neq i$.
\end{enumerate} 
\end{proposition}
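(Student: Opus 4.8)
The plan is to combine Lemma~\ref{linelem} with Lemma~\ref{quadlem}. By Lemma~\ref{linelem} a $3$-minimal $M$ has two vertices on a common line; since the grid is invariant under the reflection that interchanges the horizontal and vertical directions, I would assume without loss of generality that these two vertices lie on a common horizontal line, say $(i,j)$ and $(k,j)$ with $i<k$ (they are distinct vertices, so the inequality is strict), and write the third vertex of $M$ as $(p,q)$. Proving that this forces configuration~(i) then also yields~(ii) under the reflection, so it remains to show $q\neq j$ and $i\le p\le k$.

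To show $q\neq j$, suppose instead $q=j$, so all three vertices of $M$ lie on the line $y=j$. If $0<j<m-1$, the vertices $(0,j-1)$ and $(0,j+1)$ both exist, are distinct, and satisfy $d\big((a,j),(0,j-1)\big)=a+1=d\big((a,j),(0,j+1)\big)$ for every vertex $(a,j)$ of the line $y=j$; hence no vertex of $M$ resolves this pair and $M$ is not a resolving set. If instead $j\in\{0,m-1\}$, then $y=j$ is a side of the grid, and by Proposition~\ref{cornerprop} $M$ contains at most one of its two corner endpoints, so the other two vertices of $M$ are non-corner boundary vertices all lying on this single side; consequently $M$ contains no two boundary vertices on opposite sides, contradicting Proposition~\ref{opprop}. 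Either way we reach a contradiction, so $q\neq j$.

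To show $i\le p\le k$, suppose $p<i$; then $p<i<k$, so $p$ is strictly smaller than the horizontal coordinate of each of $(i,j)$ and $(k,j)$, while $q\neq j$ means $q$ is strictly smaller than, or strictly larger than, $j$. Hence $(i,j)$ and $(k,j)$ lie in one quadrant with respect to the origin $(p,q)$, and Lemma~\ref{quadlem}, applied with $(p,q)$ as the distinguished vertex and $\{(i,j),(k,j)\}$ as the vertex set, produces two neighbours of $(p,q)$ that are resolved by none of $(p,q),(i,j),(k,j)$, i.e.\ by no vertex of $M$ --- contradicting that $M$ is resolving. The case $p>k$ is symmetric. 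Therefore $i\le p\le k$, which together with $q\neq j$ is precisely configuration~(i).

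The quadrant step is essentially a direct invocation of Lemma~\ref{quadlem}; the only mild subtlety is that one must check its hypotheses, which hold because $i<k$ puts both collinear vertices strictly on one side of $p$ and $q\neq j$ puts them both strictly on one side of $q$. The point needing the most care is excluding three collinear vertices: a direct unresolved-pair argument works for an interior line but degenerates when the common line is a side of the grid, so there one has to fall back on Propositions~\ref{cornerprop} and~\ref{opprop}.
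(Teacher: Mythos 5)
Your proof is correct and follows essentially the same route as the paper's: Lemma~\ref{linelem} to obtain the collinear pair, an unresolved pair of vertical neighbours to rule out $q=j$, and Lemma~\ref{quadlem} to force $i\le p\le k$. The one place you go beyond the paper is the subcase where the common line is a side of the grid (so one of the two vertical neighbours needed for the unresolved pair does not exist); the paper's proof silently assumes both neighbours exist, whereas your fallback to Propositions~\ref{cornerprop} and~\ref{opprop} handles that degenerate case cleanly.
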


\begin{proof}
Suppose we have a 3-minimal $M$. We know from Lemma~\ref{linelem} that two vertices in $M$ must be on the same line. Without loss of generality, we can let these vertices be $(i,j)$ and $(k,j)$ on a horizontal line since horizontal lines are equivalent to vertical lines in a rotated grid. We can also assume that $i<j$. Let the third vertex in $M$ be $(p,q)$.\\
If $q = j$ then $p \neq i,j$ since we must have 3 distinct vertices in a 3-minimal. However, this implies that there are two vertical neighbours $(p,q)$, denoted $(p,q^+)$ and $(p,q^-)$, that are not on the same line as $(i,j)$ and $(k,j)$. Therefore, by Corollary~\ref{c_1}, $(p,q^+)$ and $(p,q^-)$ are not resolved by any vertex in $M$ which is a contradiction.\\
Now suppose $q \neq j$ and $p<i$. This implies that $p<k$. Hence by Lemma~\ref{quadlem}, since $i,k > p$ and either $q<j$ or $q>j$, $M$ is not a resolving set. A similar argument holds for $p>k$.\\
Therefore $i\leq p \leq k$. 
\end{proof}

We now have enough results to give a complete characterisation of the 3-minimals:

\begin{theorem}
A set $M$ is a 3-minimal if and only if:
\begin{enumerate}[(i)]
\item $M$ has no more than one corner vertex.
\item $M$ contains two boundary vertices on opposite sides.
\item $M$ either has two vertices on the same line, $(i,j)$ and $(k,j)$ where  $i \neq k$, and a third vertex $(p,q)$, where $i\leq p \leq k$ and $q \neq j$, \\
or two vertices on the same line, $(i,j)$ and $(i,k)$, where $(j \neq k)$, and a third vertex $(p,q)$, where $j\leq p \leq k$ and $p \neq i$.
\end{enumerate} 
\end{theorem}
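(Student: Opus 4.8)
The forward implication requires nothing new: if $M$ is a $3$-minimal then (i), (ii) and (iii) are exactly Propositions~\ref{cornerprop}, \ref{opprop} and~\ref{lineprop}. So the content is the converse, and the first thing I would do is reduce \emph{minimality} to \emph{resolvability}. A proper subset of a $3$-element set has at most two elements; nothing of size $\le 1$ resolves the grid, whose metric dimension is $2$; and a $2$-element set resolves the grid only when it consists of two corners sharing a side (the characterisation of the $2$-minimals, \cite{melter}). Condition (i) forbids $M$ from containing two corners at all, so no proper subset of $M$ is resolving, and hence $M$ is a $3$-minimal as soon as it is shown to be a resolving set. Everything therefore comes down to proving that (ii) together with (iii) forces $M$ to be resolving.

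For that I would first normalise. Using the symmetries of $P_n \square P_m$ — the isomorphism $P_n\square P_m\cong P_m\square P_n$ and the two reflections in the coordinate axes, each of which preserves (ii) and the form of (iii) — we may assume $M=\{a,b,c\}$ with $a=(i,j)$, $b=(k,j)$, $i<k$, and $c=(p,q)$, $i\le p\le k$, $q>j$. Under this normalisation a short argument pins down exactly what (ii) gives: the three horizontal coordinates of $M$ all lie in $[i,k]$ and the three vertical coordinates all lie in $[j,q]$, so a pair of vertices of $M$ lying on opposite sides must either be a left/right pair, forcing $i=0$ and $k=n-1$, or a bottom/top pair, forcing $j=0$ and $q=m-1$. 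Next, let $u\ne v$ be a pair resolved neither by $a$ nor by $b$. Subtracting $d(a,u)=d(a,v)$ from $d(b,u)=d(b,v)$ cancels the common vertical term and shows that $x\mapsto|x-i|-|x-k|$ takes the same value at $u_1$ and $v_1$; since that function is constant on $(-\infty,i]$ and on $[k,\infty)$ and strictly increasing between, we are in one of three cases: $u_1=v_1$, or $u_1,v_1\le i$, or $u_1,v_1\ge k$.

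It then remains to check that $c$ resolves $\{u,v\}$ in each case. If $u_1=v_1$, then $d(a,u)=d(a,v)$ forces $|u_2-j|=|v_2-j|$, i.e. $u_2+v_2=2j$, whence $d(c,u)-d(c,v)=|q-u_2|-|q-v_2|\ne 0$ because $q\ne j=(u_2+v_2)/2$; note that only the hypothesis $q\ne j$ is used here. If $u_1,v_1\le i$, say $u_1<v_1$, then either $i=0$ and the case is vacuous, or (ii) must hold through its second alternative, so $j=0$ and $q=m-1$; now $d(a,u)=d(a,v)$ collapses to $v_2-u_2=v_1-u_1=:\delta>0$, and since $u_1<v_1\le i\le p$ and $q=m-1\ge u_2,v_2$, a one-line computation gives $d(c,u)-d(c,v)=(v_1-u_1)+(v_2-u_2)=2\delta\ne 0$. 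The case $u_1,v_1\ge k$ is handled symmetrically, working with $b$ in place of $a$ and using $p\le k$ together with $k=n-1$ or $j=0,\ q=m-1$. In every case $\{u,v\}$ is resolved, so $M$ is resolving, and combined with the reduction above, $M$ is a $3$-minimal.

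The step I expect to be the genuine obstacle is the middle one: converting the loose hypothesis ``$M$ contains two boundary vertices on opposite sides'' into the two clean coordinate alternatives, and verifying that the normalising symmetries really do preserve (ii) as well as (iii). Once that dictionary is in place, Cases~2 and~3 either evaporate (when $ab$ already spans a full side) or reduce to the single identity $d(c,u)-d(c,v)=2\delta$ (when $ab$ lies along one side and $c$ on the opposite side), while Case~1 is disposed of by $q\ne j$ alone; all the remaining verifications are routine distance bookkeeping.
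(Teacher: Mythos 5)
Your proof is correct, but it takes a genuinely different route from the paper's. The paper's converse argument splits on the geometric position of the two opposite-side boundary vertices: if they lie on a common line, that line cuts the grid into two subgrids in which they act as a metric basis and the third landmark breaks the mirror symmetry between the halves; if not, the paper places the third landmark on a side with one of them, partitions the grid into left, middle and right subgrids, and resolves cross-subgrid pairs by monotone distance arguments. You instead take the collinear pair $a=(i,j)$, $b=(k,j)$ of condition (iii) as primary, observe that $d(a,\cdot)-d(b,\cdot)$ depends only on $x\mapsto|x-i|-|x-k|$, which is injective on $[i,k]$ and constant outside it, and thereby reduce the pairs unresolved by $a$ and $b$ to three algebraic cases ($u_1=v_1$, both $u_1,v_1\le i$, both $u_1,v_1\ge k$), each killed by a short Manhattan-distance computation for $c=(p,q)$. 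Your version buys two things: it makes the role of each hypothesis transparent (the case $u_1=v_1$ uses only $q\ne j$; condition (ii), once translated into $i=0,\ k=n-1$ or $j=0,\ q=m-1$, and the bound $i\le p\le k$ are needed only for the two outer cases), and it supplies the minimality half of the converse explicitly --- no $2$-subset of $M$ resolves because condition (i) excludes two corners --- a step the paper's proof leaves implicit. The translation of condition (ii) into the two coordinate alternatives, which you flag as the delicate point, is sound: under your normalisation the horizontal coordinates of the three landmarks all lie in $[i,k]$ and the vertical ones in $[j,q]$, so an opposite pair forces the corresponding extremes to equal $0$ and $n-1$ (resp.\ $0$ and $m-1$), and the two outer cases then either become vacuous or reduce to the identity $d(c,u)-d(c,v)=2\delta$ exactly as you claim.
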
 
\begin{proof}
It has already been shown from Propositions  \ref{cornerprop}, \ref{opprop}, and \ref{lineprop} that if any of the above conditions are not satisfied, then $M$ is not a 3-minimal. Hence, in order to prove the above statement, we need only show that if $M$ satisfies all three of the above conditions, then $M$ is a 3-minimal.\\
Suppose we have a minimal $M$ that satisfies the above conditions. By condition (ii), $M$ contains two boundary vertices, $u$ and $v$ on opposite sides. There are two possible cases: either $u$ and $v$ are on the same line, or $u$ and $v$ are on different lines.\\
Suppose $u$ and $v$ are on the same line. Clearly, neither $u$ or $v$ can be a corner without breaking condition (i). The third vertex, $w$, can be any vertex that is not on the line between $u$ and $v$ to satisfy condition (iii). The line between $u$ and $v$ divides the grid into two subgrids, $A$ and $B$, where the line is a side in each subgrid and $u$ and $v$ are corners of the side. Since two corners is a metric basis for a grid, $u$ and $v$ will resolve every pair of vertices in $A$ and every pair of vertices in $B$. Let $a \in A$ and $b \in B$ be a pair of distinct vertices that are unresolved by $u$ and $v$. This implies that the pair $a$ and $b$ are equidistant from the line between $u$ and $v$ and that $a$ and $b$ are on a line that is perpendicular to the line between $u$ and $v$. However, since $w$ is not on the line between $u$ and $v$, it must lie exclusively in $A$ or exclusively in $B$. Hence, if $w \in A$ then $d(w,a) < d(w,b)$, and if $w \in B$ then $d(w,b) < d(w,a)$. Thus $a$ and $b$ are resolved by $w$, and therefore every pair of vertices in the grid are resolved by $u$,$v$ and $w$.\\
Now suppose $u$ and $v$ are not on the same line. This implies that the third vertex $w$ is on the same line as $u$ or $v$, so without loss of generality, we let $w$ be on the same line as $u$. The vertex $w$ will either be on the same side as $u$ or on the line perpendicular to the side containing $u$. If the latter situation were the case, then $w$ would need to be on the same side as $v$ since in order to satisfy condition (iii), we are required to have a vertex of the line segment between $u$ and $w$ to be on the same line as $v$. However, this would give the same situation as in the case described above where we have opposite boundary vertices on the same line, hence $u$,$v$ and $w$ would resolve the grid. We therefore assume $w$ is on the same side as $u$. Without loss of generality (since the grid can be rotated), let $u$ be labelled by $(0,i)$ and $w$ be labelled by $(0,j)$, where $i<j$. Hence $v$ will be labelled $(p,q)$, where $p = m-1$ or $n-1$, and $i < q < j$.\\ 
Consider the subgrid that has $u$ and $w$ as corners and has boundary vertices as the other two corners. Every pair of vertices in this subgrid will be resolved by $u$ and $w$ since they are two corners that share a side of the subgrid. We refer to this subgrid as the middle subgrid, and we refer to the subgrid of all vertices whose horizontal coordinates are less than or equal to $i$ as the left subgrid, and the subgrid whose horizontal coordinates are greater than or equal to $j$ as the right subgrid. Every pair of vertices in the left subgrid are resolved by $u$ and $v$ since $u$ is a corner of this subgrid, and for every vertex $l$ in the left subgrid, there is a shortest path from $v$ to $l$ that goes through the other corner of the left subgrid that is on the same vertical line as $u$ (hence $v$ resolves the same vertices in the left subgrid that this corner would). Similarly, every pair of vertices in the right subgrid are resolved by $w$ and $v$.\\
Now suppose we have a pair of vertices $a$ and $b$, where $a$ is in the left subgrid and $b$ is in the middle subgrid, and suppose $a$ and $b$ are not resolved by $u$. If $a$ and $b$ were on the same horizontal line, then they would be equidistant from the vertical line containing $u$ which is a boundary of the left and middle subgrids. Therefore, since $w$ is in the middle subgrid but not in the left subgrid, $w$ will resolve $a$ and $b$ (as would $v$). If $a$ and $b$ were on different horizontal lines, then either $a$ would have a larger vertical coordinate than $b$, or $b$ would have a larger vertical coordinate than $a$. If $a$ had the larger vertical coordinate then $a$ and $b$ would be resolved by $w$ since $b$ and $w$ are closer together than $a$ and $w$ in both the horizontal coordinate and vertical coordinate. If $b$ had the larger vertical coordinate, then $v$ would resolve $a$ and $b$ since $b$ and $v$ are closer together than $a$ and $v$ in both the horizontal coordinate and vertical coordinate. Hence $a$ and $b$ are always resolved. Similarly, if $a$ were in the right subgrid with $b$ still in the middle subgrid, then $a$ and $b$ would be resolved.\\ 
The last case is when $a$ is in the left subgrid and $b$ is in the right subgrid. Suppose $a$ and $b$ are equidistant from $u$. If $a$ and $b$ are on the same horizontal line, then $a$ and $b$ are resolved by $w$ since $b$ and $w$ will be closer together than $a$ and $w$. If $a$ and $b$ were on different horizontal lines with $a$ having a larger vertical coordinate than $b$, then as before, $a$ and $b$ would be resolved by $w$ since $b$ and $w$ are closer together than $a$ and $w$. And similarly, if $b$ had a larger vertical coordinate than $a$, then $a$ and $b$ would be resolved by $v$.\\
Hence any pair of vertices $a$ and $b$ are resolved by $u$,$v$ and $w$ if the three conditions hold.
\end{proof}

\begin{figure}[H]
        \centering
        \begin{subfigure}[b]{0.3\textwidth}
                \includegraphics[width=\textwidth]{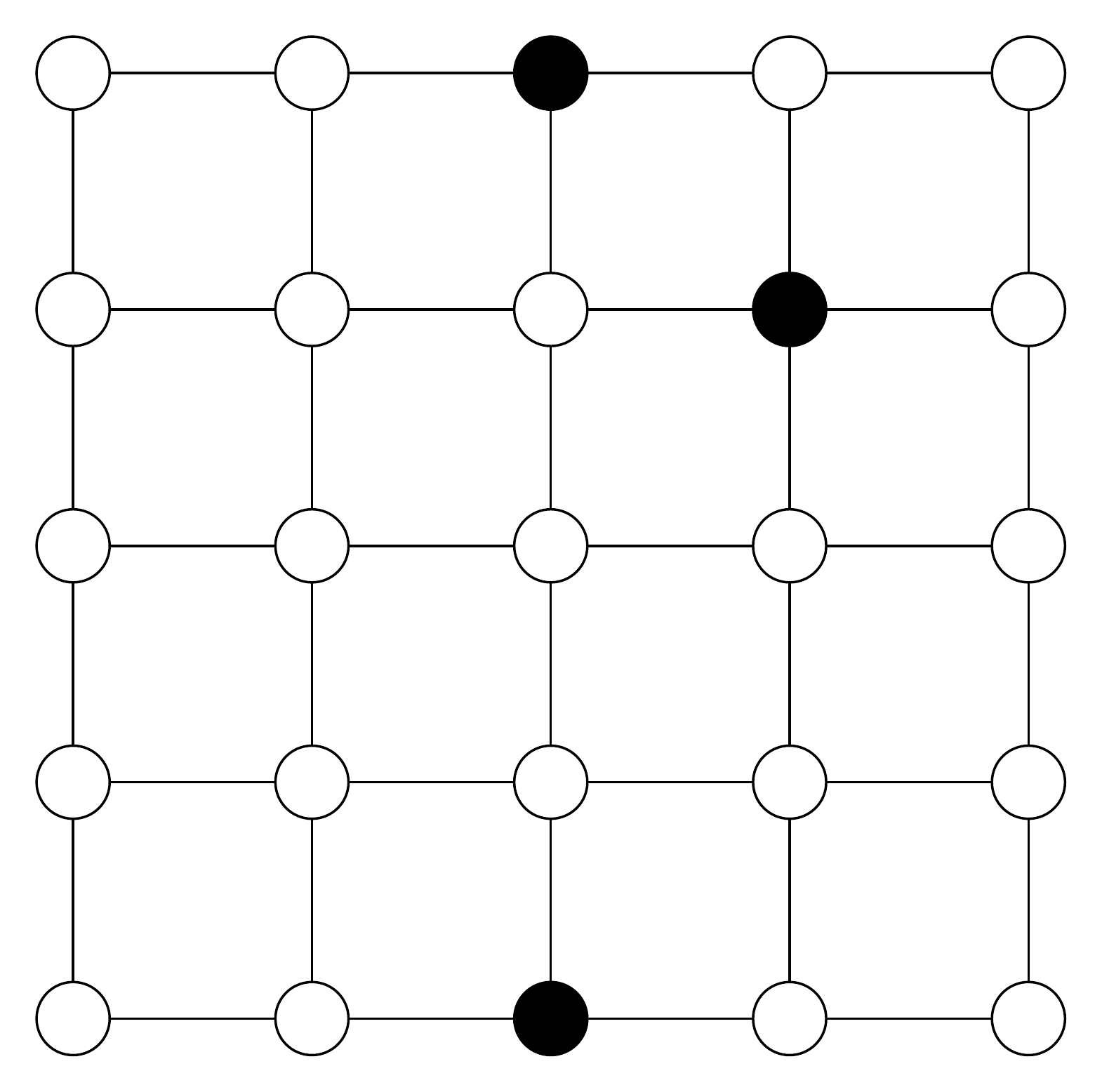}
        \end{subfigure}%
		\qquad
        \begin{subfigure}[b]{0.3\textwidth}
                \includegraphics[width=\textwidth]{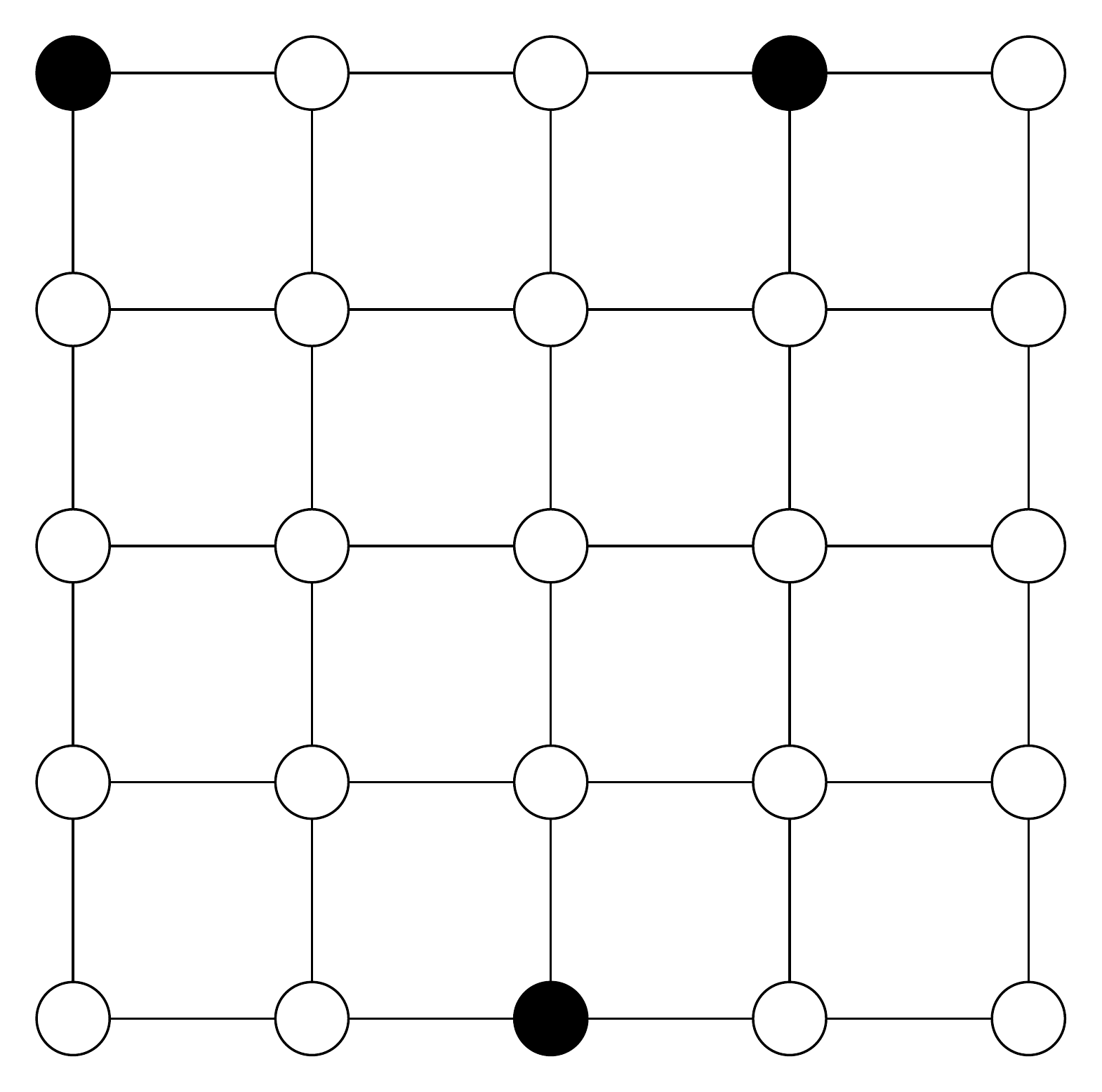}
        \end{subfigure}
        \caption{An example of the two types of 3-minimals: one with opposite boundary vertices on the same line, and one with opposite boundary vertices on different lines.}
\end{figure}

We now wish to find any $k$-minimals where $k>3$. In order to do this, we will need a more powerful version of Lemma~\ref{quadlem} which will also use a vertex as an origin and consider the quadrants with respect to this origin. First, we define the boundary of a quadrant to be the points on the two halves of the axes that define a quadrant, not including the origin, e.g., if the origin were $(0,0)$ then the points $(x,0)$ for $x>0$ and $(0,y)$ for $y>0$ would be on the boundary of the first quadrant. We say that two quadrants are opposite if they have no boundary points in common, e.g. the first and third quadrants are opposite, otherwise we say they are adjacent Also, the quadrant boundaries are not considered to be within any quadrant and neither is the origin. Now we have the following lemma:
\begin{lemma} \label{quadlem2}
Suppose we have an interior vertex $(p,q)$. Let $(p,q)$ be the origin of the coordinate axes $y=q$ and $x=p$. If the vertices $(x_1^+,y_1^+),(x_2^+,y_2^+),\dots ,(x_{k_1}^+,y_{k_1}^+)$  are in the same quadrant with respect to the origin $(p,q)$, if the vertices $(x_1^-,y_1^-),\\(x_2^-,y_2^-),\dots ,(x_{k_2}^-,y_{k_2}^-)$ are in the opposite quadrant, and if the vertices $(p,q^+_1),\\(p,q^+_2),\dots ,(p,q^+_{k_3}),(p^+_1,q),(p^+_2,q),\dots (p^+_{k_4},q)$ are the boundary points of one of these quadrants, then there exist two neighbours of $(p,q)$, denoted by $(p^*,q)$ and $(p,q^*)$, that are not resolved by any of the vertices $(p,q),(x_1^+,y_1^+),(x_2^+,y_2^+),\dots ,\\(x_{k_1}^+,y_{k_1}^+),(x_1^-,y_1^-),(x_2^-,y_2^-),\dots ,(x_{k_2}^-,y_{k_2}^-),(p,q^+_1),
(p,q^+_2),\dots ,(p,q^+_{k_3}),(p^+_1,q),\\(p^+_2,q),\dots (p^+_{k_4},q)$.
\end{lemma}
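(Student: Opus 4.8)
The plan is to generalise the argument of Lemma~\ref{quadlem} by showing that, under the stated hypotheses, the two relevant neighbours of $(p,q)$ remain unresolved not only by the ``quadrant'' vertices but also by every vertex lying on the boundary of that same quadrant. First I would set up coordinates so that the common quadrant containing the $(x_i^+,y_i^+)$ is, say, the first quadrant; then the opposite quadrant containing the $(x_i^-,y_i^-)$ is the third quadrant, and the boundary points $(p,q_i^+)$ and $(p_i^+,q)$ lie on the positive $y$-axis and positive $x$-axis respectively. The two candidate neighbours are $(p^*,q)=(p+1,q)$ and $(p,q^*)=(p,q+1)$ — the neighbours of $(p,q)$ that lie on the boundary rays of the first/third configuration. (Because $(p,q)$ is an \emph{interior} vertex, both of these neighbours exist; this is exactly where the hypothesis that $(p,q)$ is interior is used.)

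The key computation is to check $d(z,(p+1,q))=d(z,(p,q+1))$ for each type of vertex $z$ in the hypothesis. For $z=(p,q)$ itself this is immediate since a vertex never resolves two of its own neighbours. For $z=(x_i^+,y_i^+)$ in the first quadrant, both $x_i^+>p$ and $y_i^+>q$, so stepping from $(p,q)$ to $(p+1,q)$ decreases the horizontal gap by one while stepping to $(p,q+1)$ decreases the vertical gap by one; in Manhattan distance these changes are equal, so $d=|x_i^+-p|+|y_i^+-q|-1$ in both cases — this is precisely the content of Corollary~\ref{c_1}/Lemma~\ref{quadlem} and needs no new idea. The genuinely new cases are the opposite quadrant and the boundary rays. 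For $z=(x_i^-,y_i^-)$ with $x_i^-<p$ and $y_i^-<q$, stepping to $(p+1,q)$ \emph{increases} the horizontal gap by one and stepping to $(p,q+1)$ increases the vertical gap by one, so again $d=|x_i^--p|+|y_i^--q|+1$ in both cases. For a boundary point $z=(p,q_i^+)$ on the positive $y$-axis we have first coordinate exactly $p$ and $q_i^+>q$: then $d(z,(p+1,q))=1+(q_i^+-q)$ while $d(z,(p,q+1))=(q_i^+-q)-1+1\cdot 0$... more carefully, $d(z,(p,q+1))=|q_i^+-(q+1)|=q_i^+-q-1$ only if $q_i^+\ge q+1$, which holds, plus $0$ horizontally, giving $q_i^+-q-1$; that is \emph{not} equal to $1+(q_i^+-q)$.

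Here is the main obstacle, and the point I would have to get exactly right: the two neighbours we pick must be chosen \emph{relative to which quadrant's boundary is listed}. If the listed boundary is that of the first quadrant (positive axes), then the unresolved neighbours should be the two neighbours of $(p,q)$ lying in the \emph{second} and \emph{fourth} quadrants' interiors — i.e. $(p-1,q)$ and $(p,q+1)$, or $(p+1,q)$ and $(p,q-1)$ — not the two I tentatively wrote above; a point on the positive $y$-axis is equidistant from $(p-1,q)$ and $(p+1,q)$ but that is not the pair we want. The correct statement is: a vertex on the boundary of the first quadrant fails to resolve the neighbour pair $\{(p-1,q),(p,q-1)\}$? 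No — one double-checks with $z=(p,q_i^+)$: $d(z,(p-1,q))=1+(q_i^+-q)$ and $d(z,(p,q-1))=(q_i^+-q)+1$, equal. And $z=(p_i^+,q)$ gives $d(z,(p-1,q))=(p_i^+-p)+1$ and $d(z,(p,q-1))=(p_i^+-p)+1$, equal. So the pair to choose is $\{(p-1,q),(p,q-1)\}$, the neighbours pointing into the quadrant \emph{opposite} to the one whose boundary was listed. I would then re-verify the first-quadrant and third-quadrant cases for this pair — $z=(x_i^+,y_i^+)$: $d(z,(p-1,q))=(x_i^+-p)+1+(y_i^+-q)$ and $d(z,(p,q-1))=(x_i^+-p)+(y_i^+-q)+1$, equal; $z=(x_i^-,y_i^-)$: $d(z,(p-1,q))=(p-x_i^-)-1+(q-y_i^-)$ and $d(z,(p,q-1))=(p-x_i^-)+(q-y_i^-)-1$, equal — and conclude. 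So the structure of the write-up is: (1) reduce by symmetry to the first-quadrant-boundary case; (2) declare the neighbour pair $(p^*,q)=(p-1,q)$, $(p,q^*)=(p,q-1)$, noting both exist since $(p,q)$ is interior; (3) a short case analysis over the four vertex types showing the two distances agree; (4) invoke that $(p,q)$ does not resolve its own neighbours. The only real subtlety is step (2) — getting the orientation of the chosen neighbours correct relative to the listed boundary — and I would spell that out carefully rather than leaving it to "without loss of generality".
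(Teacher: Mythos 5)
Your proposal is correct, and the self-correction in the middle lands on exactly the right pair of neighbours: the two neighbours of $(p,q)$ pointing into the quadrant \emph{opposite} to the one whose boundary carries the listed boundary points (in your coordinates, $(p-1,q)$ and $(p,q-1)$ when the boundary points lie on the positive axes). This is the same pair the paper identifies, but you reach it by a different route. The paper's proof never computes a Manhattan distance directly: it applies Lemma~\ref{quadlem} once at $(p,q)$ to handle the opposite-quadrant vertices $(x_i^-,y_i^-)$, obtaining the unresolved pair $(p^-,q)$, $(p,q^-)$, and then applies Lemma~\ref{quadlem} a second time at the auxiliary diagonal vertex $(p^-,q^-)$ (the common neighbour of that pair lying inside the opposite quadrant), observing that the same-quadrant vertices \emph{and} the boundary points all sit in a single quadrant relative to $(p^-,q^-)$, so they too fail to resolve that pair. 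Your argument instead verifies $d\bigl(z,(p-1,q)\bigr)=d\bigl(z,(p,q-1)\bigr)$ by a four-case distance calculation over the vertex types. Yours is more elementary and self-contained, and has the virtue of making the orientation issue explicit (your initial wrong guess of $(p+1,q),(p,q+1)$ and its repair is precisely the subtlety the paper's ``WLOG'' glosses over); the paper's version buys brevity and reuses the machinery of Lemma~\ref{quadlem}, which matters because the same quadrant-based reasoning is invoked repeatedly later in the paper. Both are valid proofs of the statement.
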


\begin{figure}[H]
        \centering
        \includegraphics[width=0.5\textwidth]{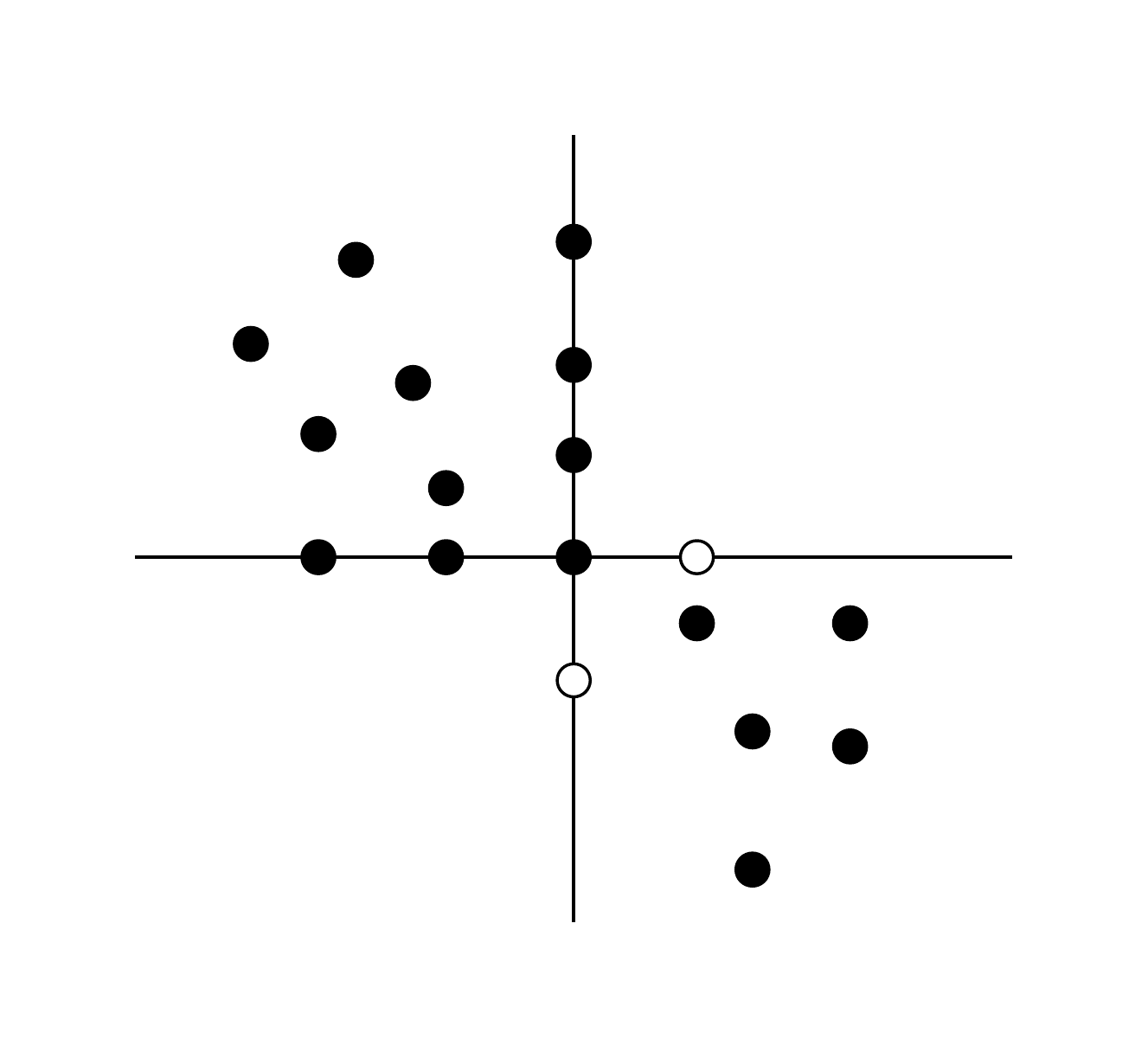}
        \caption{The situation described in Lemma~\ref{quadlem2}, where the white vertices are not resolved by any of the black vertices.}
\end{figure}

\begin{proof}
Assume without loss of generality that the vertices $(p,q^+_1), (p,q^+_2), \dots ,\\(p,q^+_{k_3}),(p^+_1,q), (p^+_2,q), \dots (p^+_{k_4},q)$ are the boundary points of the quadrant containing $(x_1^+,y_1^+),(x_2^+,y_2^+),\dots ,(x_{k_1}^+,y_{k_1}^+)$.\\
By Lemma~\ref{quadlem}, since the vertices $(x_1^-,y_1^-),(x_2^-,y_2^-),\dots ,(x_{k_2}^-,y_{k_2}^-)$ are in the same quadrant with respect to $(p,q)$, then there exist two vertices, which we will denote by $(p^-,q)$ and $(p,q^-)$ that are not resolved by any of the  vertices $(p,q),(x_1^-,y_1^-),(x_2^-,y_2^-),\dots ,(x_{k_2}^-,y_{k_2}^-)$.\\
Consider the shared neighbour of $(p^-,q)$ and $(p,q^-)$ which resides in the quadrant containing $(x_1^-,y_1^-),(x_2^-,y_2^-),\dots ,(x_{k_2}^-,y_{k_2}^-)$. We denote this vertex by $(p^-,q^-)$. Now the vertices $(x_1^+,y_1^+),(x_2^+,y_2^+),\dots ,(x_{k_1}^+,y_{k_1}^+),(p,q^+_1),(p,q^+_2),\dots ,\\(p,q^+_{k_3}),
(p^+_1,q),(p^+_2,q),\dots (p^+_{k_4},q)$ are all in the same quadrant with respect to $(p^-,q^-)$ as the origin, so by Lemma~\ref{quadlem}, there are two neighbours of $(p^-,q^-)$ that are unresolved by any of the vertices $(p^-,q^-),(x_1^+,y_1^+),(x_2^+,y_2^+),\dots ,(x_{k_1}^+,y_{k_1}^+),\\
(p,q^+_1),(p,q^+_2),\dots ,(p,q^+_{k_3}),(p^+_1,q),(p^+_2,q),\dots (p^+_{k_4},q)$. These neighbours must be $(p^-,q)$ and $(p,q^-)$ since they lie on the boundary of the quadrant containing $(p^-,q^-)$, hence the vertices $(p^-,q) = (p^*,q)$ and $(p,q^-) = (p,q^*)$ are not resolved by any of the vertices $(p,q),(x_1^+,y_1^+),(x_2^+,y_2^+),\dots ,(x_{k_1}^+,y_{k_1}^+),
(x_1^-,y_1^-),\\
(x_2^-,y_2^-),\dots ,(x_{k_2}^-,y_{k_2}^-),
(p,q^+_1),(p,q^+_2),\dots ,(p,q^+_{k_3}),(p^+_1,q),(p^+_2,q),\dots (p^+_{k_4},q)$. 
\end{proof}

Lemmas~\ref{quadlem} and~\ref{quadlem2} can be used to show that a set of vertices does not resolve the grid by finding a vertex that has a pair of neighbours that are unresolved. If a vertex does not have a pair of neighbours that are unresolved, then we say the vertex has a \textit{locally resolved neighbourhood}. For general graphs, if every vertex has a locally resolved neighbourhood, we cannot say that the graph is resolved. However, it turns out that for grid graphs, we are allowed to make this conclusion.

\begin{theorem}\label{localthm}
If  $G=(V,E)$ is a grid and $R \subseteq V$ is a set of vertices such that every vertex in $G$ has a locally resolved neighbourhood with respect to $R$, then $R$ is a resolving set for $G$. 
\end{theorem}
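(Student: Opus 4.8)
The plan is to prove the contrapositive: assuming $R$ is \emph{not} a resolving set, I will exhibit a vertex of $G$ with two neighbours that are unresolved by $R$. Fix distinct vertices $a,b$ with $d(w,a)=d(w,b)$ for all $w\in R$. Since grids are bipartite, $d(a,b)$ is even. Applying an isometry of the grid (a reflection and/or a transposition of the two coordinate directions, and possibly swapping the names of $a$ and $b$) — which changes neither the hypothesis nor the conclusion, it merely relabels $R$ — I may assume $a=(a_1,a_2)$ and $b=(a_1+s,\,a_2+t)$ with $0\le s\le t$ and $s+t$ even. I treat the three cases $s=0$, $0<s<t$, and $s=t$ separately.

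If $s=0$ (so $a$ and $b$ lie on a common vertical line), then for every landmark $w=(w_1,w_2)$ the equation $d(w,a)=d(w,b)$ collapses to $|w_2-a_2|=|w_2-a_2-t|$, which forces $w_2=a_2+t/2$; thus \emph{every} landmark lies on the horizontal line through the midpoint $m=(a_1,\,a_2+t/2)$ of the segment $ab$. A one-line computation then shows that for each such $w$ the two vertical neighbours of $m$ are equidistant from $w$, so those two neighbours of $m$ are unresolved by $R$ (all vertices named here lie in the grid because $a,b$ do), and $m$ fails to have a locally resolved neighbourhood.

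The heart of the argument is the case $0<s<t$. Writing $d(w,a)=d(w,b)$ as $p(w_1)+q(w_2)=0$ with $p(x)=|x-a_1|-|x-a_1-s|$ and $q(y)=|y-a_2|-|y-a_2-t|$, and using that $p$ is nondecreasing and piecewise-linear with range $[-s,s]$ while $q$ has the \emph{strictly larger} range $[-t,t]$, a short analysis of these functions forces every landmark onto a ``broken line'' $L$: a horizontal ray at height $a_2+k$, the anti-diagonal segment from $(a_1,\,a_2+k)$ to $(a_1+s,\,a_2+k-s)$, and a horizontal ray at height $a_2+k-s$, where $k=(s+t)/2$. I then verify, checking each of these three pieces in turn, that $d(w,a')=d(w,b')$ for the ``vertically squeezed'' pair $a'=(a_1,\,a_2+1)$, $b'=(a_1+s,\,a_2+t-1)$. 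Hence $(a',b')$ is again a pair unresolved by $R$, still lying in the grid, with the same horizontal gap $s$ but vertical gap $t-2$; iterating this squeeze $(t-s)/2$ times produces a pair $(a^\star,b^\star)$, unresolved by $R$, with horizontal and vertical gaps both equal to $s$.

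It remains to handle an unresolved pair whose two gaps are equal to $s\ge 1$. If $s=1$, then $a^\star$ and $b^\star$ are at distance $2$ and share a common neighbour $c$, whose neighbours $a^\star,b^\star$ are unresolved, so $c$ fails. If $s\ge 2$, the same kind of analysis of $p+q=0$ (now with $p,q$ of equal range $[-s,s]$) shows that every landmark lies in the union of the region $\{w_1\le a^\star_1,\ w_2\ge a^\star_2+s\}$, the region $\{w_1\ge a^\star_1+s,\ w_2\le a^\star_2\}$, and the anti-diagonal segment joining them; equivalently, with $v^\star=(a^\star_1+1,\,a^\star_2+s-1)$ as origin, every landmark lies in two opposite open quadrants or equals $v^\star$. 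Since $s\ge 2$ and $a^\star,b^\star$ lie in the grid, $v^\star$ is an \emph{interior} vertex, so Lemma~\ref{quadlem2} (with the list of boundary landmarks empty — or Lemma~\ref{quadlem}, which needs no interiority, in the degenerate situation where only one of the two quadrants contains landmarks) yields two neighbours of $v^\star$ unresolved by $R$. In every case $G$ has a vertex without a locally resolved neighbourhood, which proves the theorem. The step I expect to be the main obstacle is the structural analysis in the case $0<s<t$ — pinning down that every landmark must lie on $L$, then the three distance computations showing the squeezed pair stays unresolved (an analogous case split underlies the balanced case) — together with the bookkeeping needed to be sure $v^\star$ is interior and that degenerate landmark configurations (very few landmarks, or landmarks confined to a single anti-diagonal, which is exactly what happens when $a$ and $b$ sit on opposite sides of the grid) are still covered by Lemma~\ref{quadlem}.
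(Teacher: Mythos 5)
Your proof is correct, but it takes a genuinely different route from the paper. The paper proves Theorem~\ref{localthm} by induction on the grid dimensions: it verifies the claim for $P_3 \square P_3$ by inspecting the two essentially distinct landmark configurations, then adds one row $B$ to a resolved grid $A$ and checks pairs according to whether both vertices lie in $A$, both in $B$, or one in each, using the locally-resolved hypothesis at a well-chosen vertex $b^*$ of the new row to manufacture a landmark $a^*$ that resolves the cross pairs. You instead argue the contrapositive directly and globally: you compute the exact locus of vertices failing to resolve a given pair $(a,b)$ (writing $d(w,a)-d(w,b)=p(w_1)+q(w_2)$ and exploiting the ranges $[-s,s]$ and $[-t,t]$ of the two piecewise-linear functions), show this locus is a broken line or a pair of opposite regions joined by an anti-diagonal, ``squeeze'' the unresolved pair down to a balanced one, and then exhibit an explicit vertex on the bisector whose neighbourhood fails to be locally resolved via Lemma~\ref{quadlem} or Lemma~\ref{quadlem2}. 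Your computations check out: the locus characterisation, the three-piece verification that the squeezed pair stays unresolved, the interiority of $v^\star$ when $s\ge 2$, and the containment of the two regions in opposite open quadrants about $v^\star$ are all right. What your approach buys is a self-contained, induction-free argument that actually locates a witness vertex (it lies on the perpendicular bisector of the unresolved pair), whereas the paper's induction is shorter on paper but leans on figures for the base case and on several informal reductions in the inductive step. Two small points to tidy: you should note explicitly that $R\neq\emptyset$ (otherwise every vertex trivially lacks a locally resolved neighbourhood, and the bipartite parity argument for $s+t$ even needs a landmark to quote), and in the balanced case you should dispose of the fully degenerate configuration in which every landmark equals $v^\star$ itself, where neither quadrant lemma is needed since a lone landmark never resolves two of its own neighbours; you already flag the one-quadrant degeneracy correctly as an application of Lemma~\ref{quadlem}.
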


\begin{proof}
The proof of this theorem is by induction. We start with the graph $G = P_3 \square P_3$ and attempt to construct a set $R$ that gives every vertex in the grid a locally resolved neighbourhood. The proof of Proposition~\ref{opprop} shows that the corners of a grid do not have locally resolved neighbourhoods if we do not have two boundary vertices on opposite sides as landmarks. Therefore $R$ must contain two boundary vertices on opposite sides. If these vertices are two corners on the same side, then $G$ would be resolved and so we are done. The proof of Proposition~\ref{cornerprop} shows that these two vertices will not locally resolve the grid if they are corners on opposite sides, so without loss of generality, since reflections and rotations do not change the grid, we have two cases, as shown in Fig~\ref{localfig}. 

\begin{figure}[H]
        \centering
        \begin{subfigure}[b]{0.45\textwidth}
                \includegraphics[width=\textwidth]{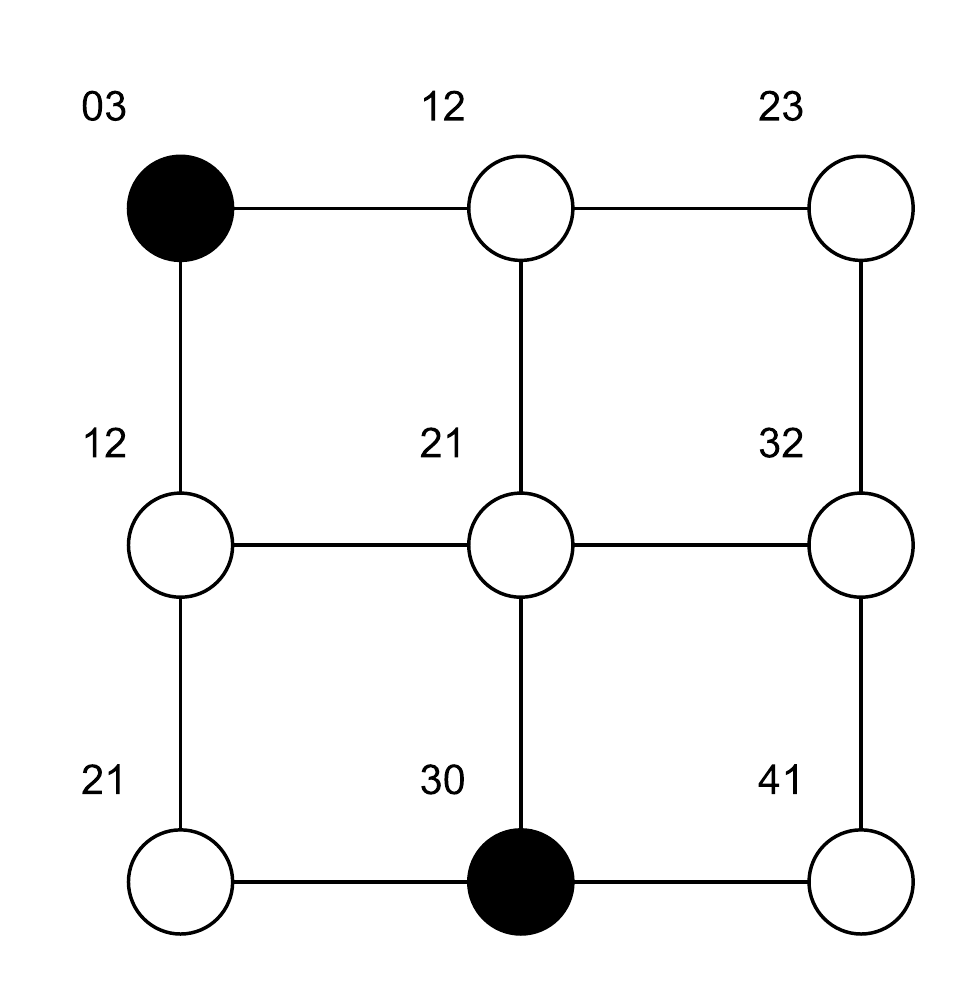}
        \end{subfigure}%
		\qquad
        \begin{subfigure}[b]{0.45\textwidth}
                \includegraphics[width=\textwidth]{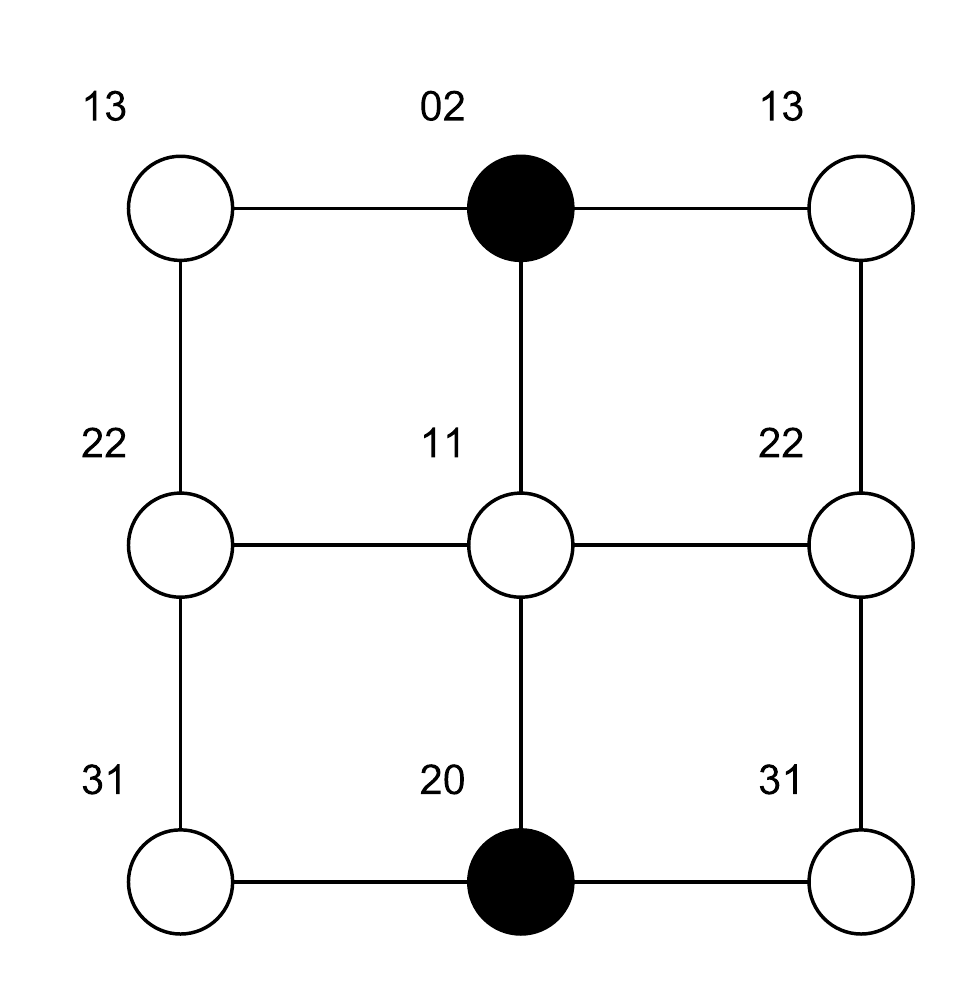}
        \end{subfigure}
        \caption{The black vertices are the current elements of $R$. The vertex labels are the shortest distances from the black vertices.}
        \label{localfig}
\end{figure}
In both of these cases, the pairs of vertices that are unresolved are pairwise disjoint and for each of these pairs, there is a vertex that has both members of the pair as neighbours. Hence if we added vertices to $R$ to give every vertex in $G$ a locally resolved neighbour, then $G$ would be resolved, hence the theorem is true for $G = P_3 \square P_3$.\\
Now we suppose that the theorem is true for some $G=A$ where $A$ is a grid graph. We extend this grid by adding an extra row/column of vertices which we will denote by the set $B$. Without loss of generality, we let $B$ be a new row placed at the bottom of $A$. We denote this extended graph by $G^+$. Let $R$ be a set of landmark vertices in $G^+$ that gives every vertex in $G^+$ a locally resolved neighbourhood. Any pair of vertices in $A$ will be resolved by the induction hypothesis. Note that this remains true even if $R$ contained vertices in $B$ since a having a landmark $b \in B$ would be equivalent to having the vertex above $b$ as a landmark in $A$ when considering the resolvability of the $A$ subgrid. Suppose we have a pair of vertices in $B$ that is not resolved by any landmark. Let this pair be $(b_1,b_2)$. If $(b_1,b_2)$ is not resolved by any vertex in $B$ then this implies that there is a pair of vertices in $A$, denoted by $(a_1,a_2)$, which is not resolved by any landmark in $B$ where $a_1$ is the vertex above $b_1$ and $a_2$ is the vertex above $b_2$. This is because there is always a shortest path from a landmark in $B$ to $a_1$ that passes through $b_1$ (and similarly for $a_2$ and $b_2$). However, $(a_1,a_2)$ would also not be resolved by any landmark in $A$ since for every landmark in $A$, a shortest path to $b_1$ will have $a_1$ as the second last vertex (and similarly for $b_2$ and $a_2$). Hence if $(b_1,b_2)$ is not resolved by any landmark in $G^+$ then $(a_1,a_2)$ is not resolved which contradicts the induction hypothesis. Thus every pair of vertices in $B$ is resolved.\\
Now we need only consider the pairs of vertices $(a,b)$ where $a \in A$ and $b \in B$. Let $b$ be a vertex that is between two landmarks in $B$. Let $b^-$ be the landmark to the left of $b$ and let $b^+$ be the landmark to the right of $b$. Suppose the pair $(a,b)$ was not resolved by $b^+$. Since $a$ is at least one row above $b^+$ it must be at least one column to the right of $b$ since $b$ is to the left of $b^+$ on the same horizontal line and $d(b^+,b)=d(b^+,a)$. However, this implies that there is a shortest path from $b^-$ to $a$ that goes through $b$ since $b^-$ and $b$ are on the same horizontal line, $b$ is to the right of $b^-$, and $a$ is to the right of $b$. Hence $d(b^-,b) \neq d(b^-,a)$ so $(a,b)$ is resolved by $b^-$. Equivalently, if $(a,b)$ were not resolved by $b^-$ then the pair would be resolved by $b^+$.\\
Now suppose that $b$ is not between any two or more landmarks in $B$. This means that $b$ is to the left of the leftmost landmark in $B$, to the right of the rightmost vertex in $B$, or $B$ contains no landmarks. If $B$ contains a landmark, then without loss of generality, let $b$ be to the left of the leftmost landmark in $B$. We will denote this landmark by $b^*$. If $B$ does not contain any landmarks then we let $b$ be any vertex in $B$ and, without loss of generality, we let $b^*$ be the right neighbour of $b$. The vertex $b^*$ has a locally resolved neighbourhood; it follows that the pair of vertices consisting of the neighbour to the left of $b^*$ and the neighbour above $b^*$ must be resolved. There are no landmarks in $B$ that will resolve this pair since the only possible landmarks in $B$ are $b^*$ and vertices to the right of $b^*$, which implies that there always exist shortest paths from any landmark in $B$ to each these two neighbours of $b^*$ that contain $b^*$ as the second last vertex in the path. Furthermore, Lemma~\ref{quadlem} implies that no vertex in $A$ that is to the left of $b^*$ will resolve this pair. Hence there must be a landmark in $A$, which we will denote by $a^*$, that is either directly above $b^*$ on the same vertical line or to the right of $b^*$. Suppose $(a,b)$ is not resolved by $b^*$. This means that $d(b^*,b)=d(b^*,a)$. Since $a^*$ is directly above or to the right of $b^*$ and $b^*$ is to the right of $b$ on the same horizontal line, $d(a^*,b)=d(a^*,b^*) + d(b^*,b) = d(a^*,b^*) + d(b^*,a)$. If $(a,b)$ was not resolved by $a^*$, then $d(a^*,a) = d(a^*,b) \Rightarrow d(a^*,a)=d(a^*,b^*) + d(b^*,a)$ which is a contradiction since $b^*$ is below both $a$ and $a^*$ so no shortest path from $a^*$ to $a$ would contain $b^*$. Hence $(a,b)$ is resolved by $a^*$ and thus any pair $(a,b)$ is resolved by landmarks in $R$.\\
Therefore $G^+$ is resolved by the landmarks in $R$.
\end{proof}

If we let a vertex in the grid be the origin of a set of axes and consider the landmarks with respect to this origin as we did in Lemmas~\ref{quadlem} and~\ref{quadlem2}, then there are only a few different types of situations where the vertex is locally resolved and so we can use this to give a weak characterisation of an arbitrary resolving set of a grid graph. Satisfying Proposition~\ref{opprop} will guarantee that the corner vertices have locally resolved neighbourhoods, so we need only consider the situations for side and interior vertices.\\
For side vertices, we consider the situations that differ from the one described in Lemma~\ref{quadlem} since we know that this situation will not locally resolve a side vertex. This leaves us with two cases:
\begin{itemize}
\item \textbf{Side Case (1)}: There are landmarks in two different quadrants.
\end{itemize}
The other situation that differs from the one described in Lemma~\ref{quadlem} is when we have a landmark on a quadrant boundary. Denote this quadrant boundary by $q$. This alone will not give a locally resolved neighbourhood, so we must include an additional landmark vertex somewhere other than $q$. We cannot put the additional landmark in a quadrant that does not have $q$ as a boundary as this will leave the same pair that was unresolved by the first landmark unresolved. Thus we get the following case:
\begin{itemize}
\item \textbf{Side Case (2)}: There is a landmark on a quadrant boundary $q$ and an additional landmark that is not in $q$ or in the quadrant that does not have $q$ as a boundary.
\end{itemize}

\begin{figure}[H]
        \centering
        \begin{subfigure}[b]{0.4\textwidth}
                \includegraphics[width=\textwidth]{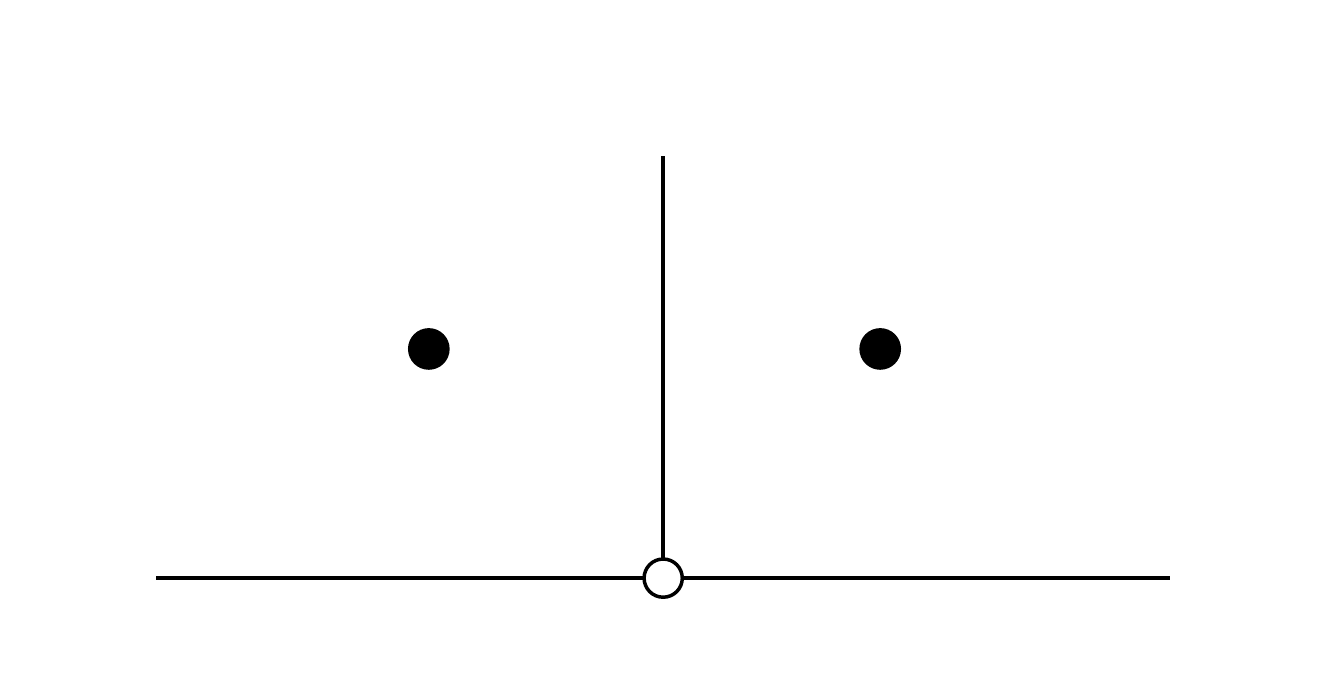}
                \caption{Side Case (1)}
        \end{subfigure}%
		\qquad
        \begin{subfigure}[b]{0.4\textwidth}
                \includegraphics[width=\textwidth]{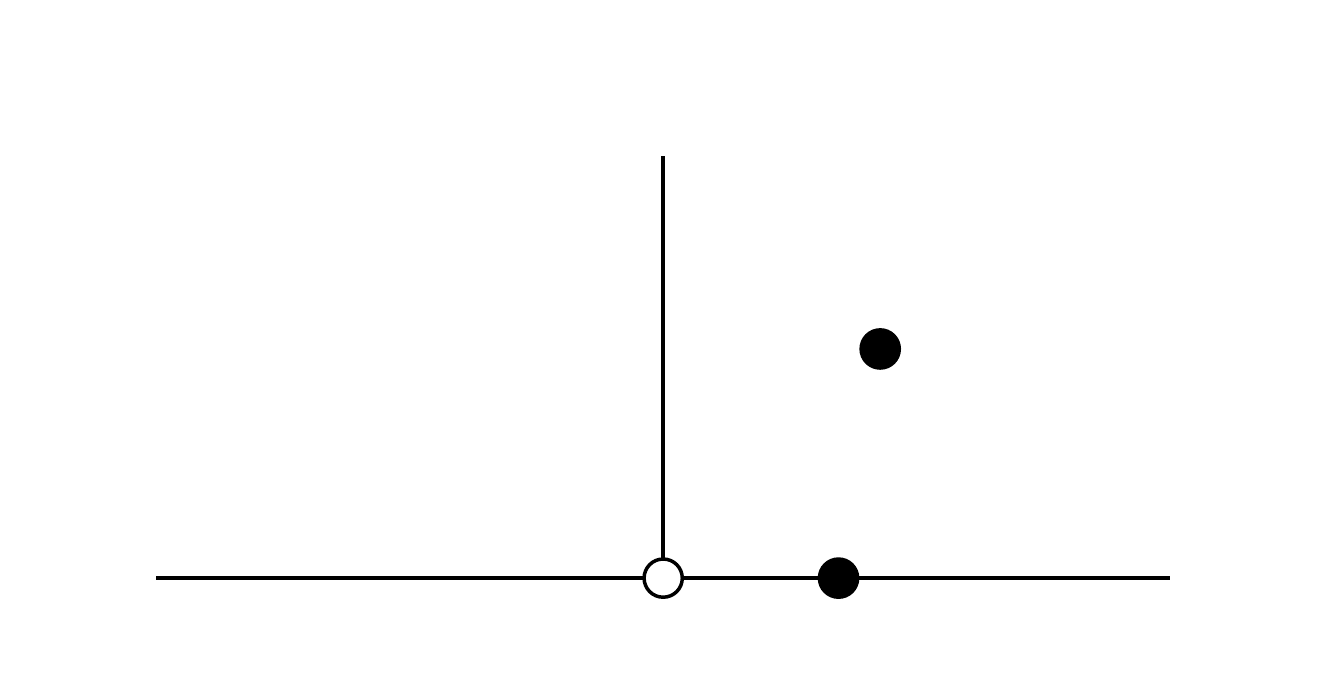}
                \caption{Side Case (2)}
        \end{subfigure}
\end{figure}

For interior vertices, we consider the situations that differ from the one described in Lemma~\ref{quadlem2}. This leaves us with three cases:
\begin{itemize}
\item \textbf{Interior Case (1)}: There are landmarks in two adjacent quadrants.
\item \textbf{Interior Case (2)}: There are a landmarks on both boundaries of the same quadrant and another landmark that is in an adjacent quadrant.
\end{itemize}
The only other situation that differs from the one described in Lemma~\ref{quadlem2} is when we have landmarks in two different quadrant boundaries that do not share a quadrant. Denote these quadrant boundaries by $p$ and $q$. This alone will not give a locally resolved neighbourhood but putting an additional landmark anywhere except $p$ and $q$ will. Therefore, we get the following case.
\begin{itemize}
\item \textbf{Interior Case (3)}: There are landmarks in two different quadrant boundaries $p$ and $q$ that do not share a quadrant and an additional landmark that is not in $p$ or $q$.
\end{itemize}

\begin{figure}[H]
        \centering
        \begin{subfigure}[b]{0.4\textwidth}
                \includegraphics[width=\textwidth]{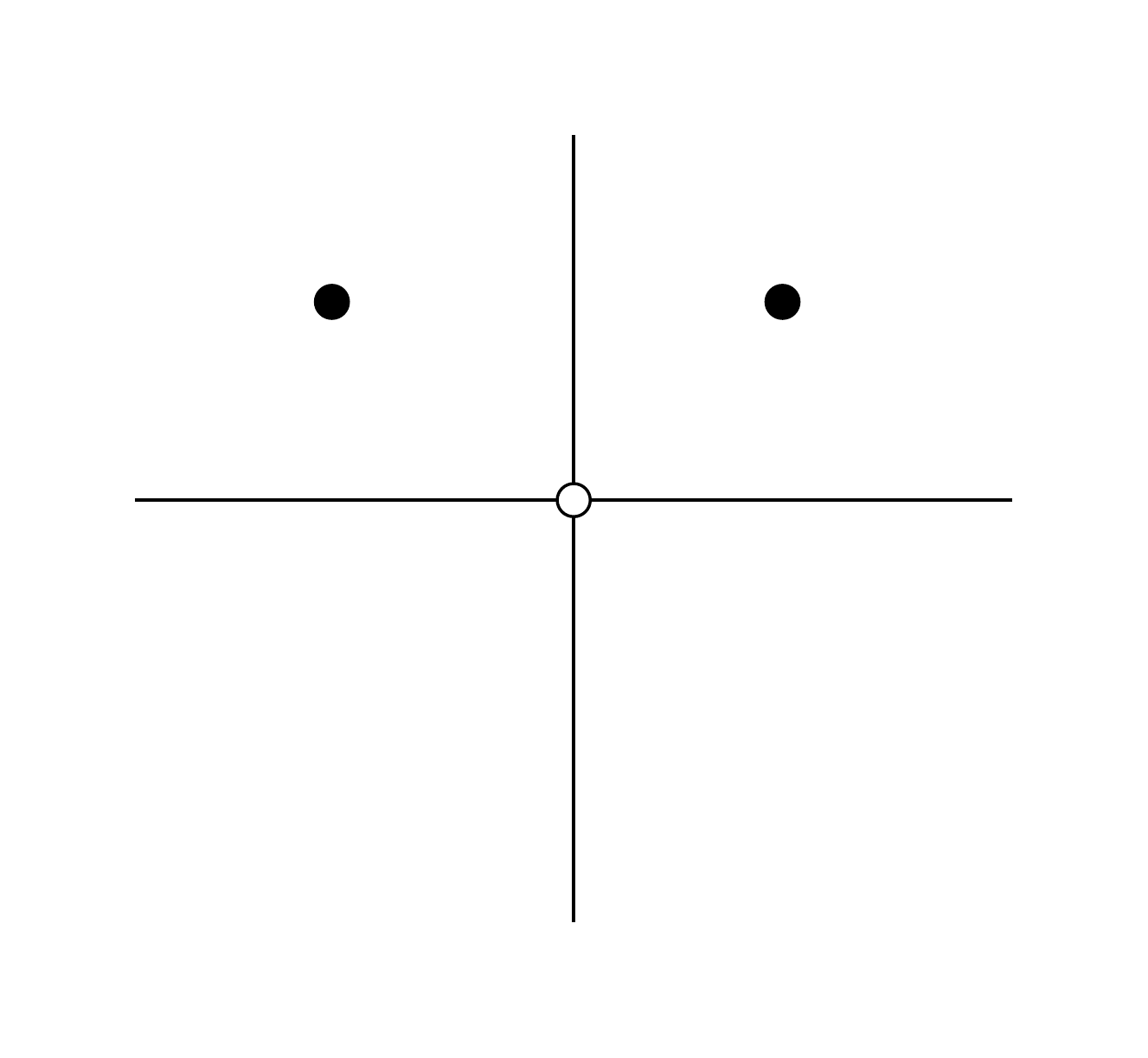}
                \caption{Interior Case (1)}
        \end{subfigure}%
		\qquad
        \begin{subfigure}[b]{0.4\textwidth}
                \includegraphics[width=\textwidth]{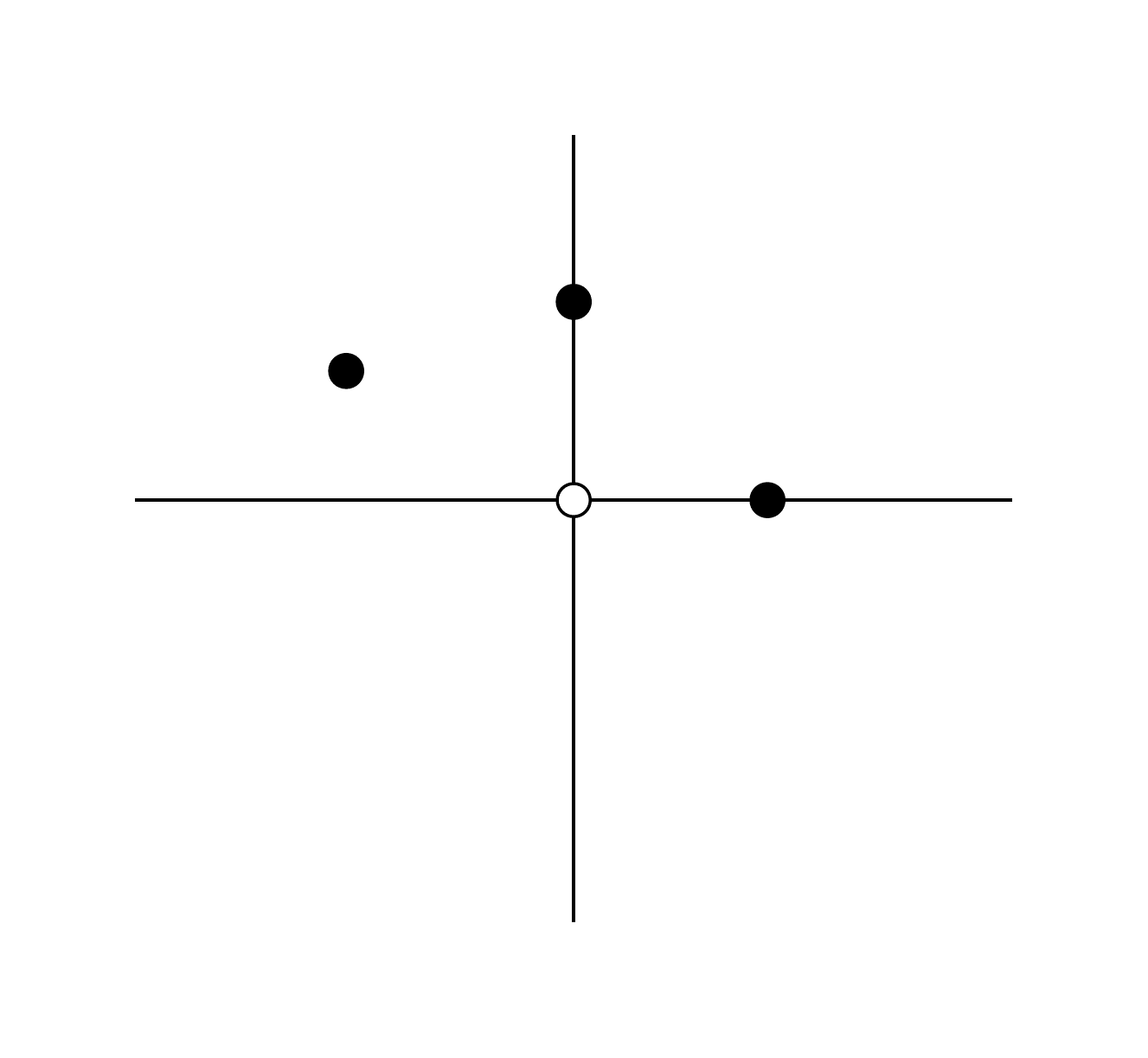}
                \caption{Interior Case (2)}
        \end{subfigure}
        \qquad
        \begin{subfigure}[b]{0.4\textwidth}
                \includegraphics[width=\textwidth]{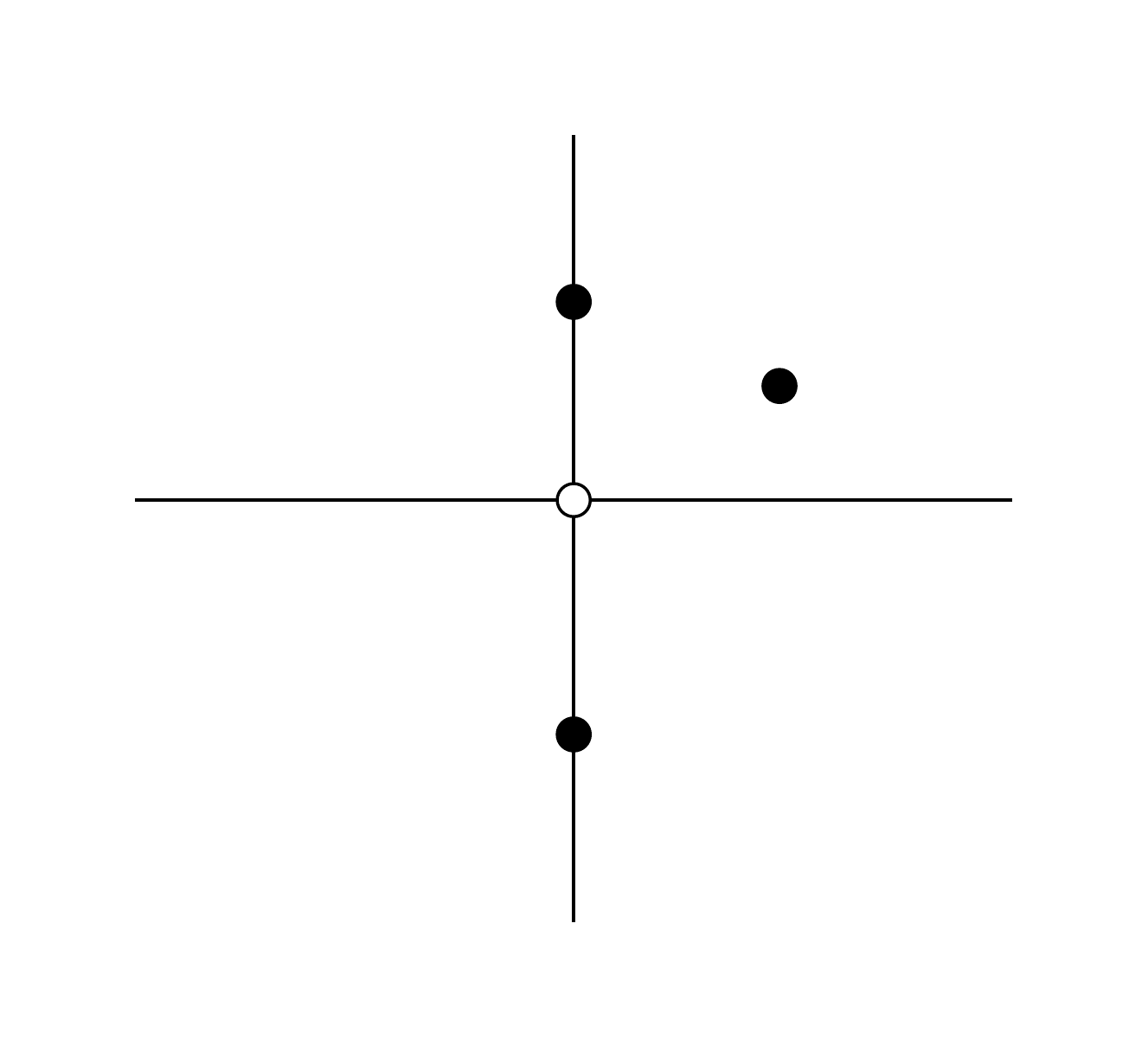}
                \caption{Interior Case (3)}
        \end{subfigure}
\end{figure}

It can easily be verified that Side Cases (1) and (2) and Interior Cases (1), (2) and (3) give the origin a locally resolved neighbourhood. Hence our weak characterisation of an arbitrary resolving set of a grid graph is a set of vertices that contains two boundary vertices on opposite sides, and in which every side vertex is in a situation described by Side Case (1) or (2) and every interior vertex is in a situation described by Interior Case (1), (2) or (3) with respect to the vertices in the set.\\
It is possible to use our weak characterisation to find classes of $k$-minimals. We have characterised one such class. In order to describe the characterisation of this class of $k$-minimals, we first need some additional definitions.\\
A \textit{line segment} between two vertices $a$ and $b$ that are on the same line is the unique shortest path between $a$ and $b$. A line segment can either be a horizontal line segment or a vertical line. Now suppose we have a set of vertices $R$ on the grid. We define a \textit{horizontal line segment path} between two vertices $u$ and $v$ with respect to $R$, where $u,v \in R$, to be a shortest path from $u$ to $v$ that only uses horizontal line segments between the vertices in $R$ and the vertical lines that intersect these line segments (if such a path exists). If there are no vertices in $R$ on the same horizontal line as a vertex $w \in R$, then $w$ is a horizontal line segment of length one. A similar definition exists for the vertical line segment path that instead uses vertical line segments and the horizontal lines that intersect them. We say that the horizontal (vertical) line segment path is minimal if the horizontal (vertical) line segment path between $u$ and $v$ with respect to $R$ exists, but no horizontal (vertical) line segment path exists between $u$ and $v$ with respect to any set $R-\{w\}$, where $w \in R$.\\
Let $X_1,X_2,\dots,X_l$ be the sets of horizontal coordinates of the vertices in each of the horizontal line segments between vertices in then set $R$. If the horizontal coordinate of $u$ is $p$ and the horizontal coordinate of $v$ is $q$, where $u,v \in R$ and $p<q$, then there is no horizontal line segment path from $u$ to $v$ with respect to $R$ if $\{p,p+1, \dots,q-1, q\}\nsubseteq \bigcup_{i=1}^l X_i$. Below we have given some necessary conditions that must be satisfied in order for this horizontal line segment path to be minimal:

\begin{enumerate}[(1)]
\item There are no more than two vertices in the same row since we would only need the pair of vertices with the largest horizontal distance between them.
\item $X_i \cap X_j \cap X_k = \emptyset$ for any three horizontal line segments, otherwise we could achieve the same result using only two of the horizontal line segments.
\item  $X_i \nsubseteq X_j$ for any two horizontal line segments, otherwise we could remove the line segment with the $X_i$ as the horizontal coordinates and we would still achieve the same result.
\item The largest horizontal coordinate in any $X_i$ is greater or equal to $p$.
\item The smallest horizontal coordinate in any $X_i$ is less or equal than $q$.
\item If $u$ is above $v$, then if horizontal line segment with horizontal coordinates $X_i$ is above the horizontal line segment with coordinates $X_j$, where neither of these horizontal line segments contain $u$ or $v$, then the largest horizontal coordinate in $X_j$ must be strictly greater than the largest horizontal coordinate in $X_i$. If we had this situation and the line segment with coordinates $X_i$ were necessary for the horizontal line segment path, then there would exist such a path that would not use the line segment with coordinates $X_j$.
\end{enumerate} 
  
We now use the above definitions to give the characterisation of a class of $k$-minimals:

\begin{theorem} \label{segementthm}
A set M of cardinality $k>3$ is a $k$-minimal if:
\begin{enumerate}[(i)]
\item $M$ has no more than one corner vertex.
\item $M$ contains two boundary vertices, $u$ and $v$, on opposite sides that are not on the same line. Furthermore, $M$ does not contain any other pair of vertices on opposite sides.
\item There is a minimal horizontal line segment path between $u$ and $v$ with respect to $M$ if $u$ and $v$ are on horizontal sides, otherwise there is a minimal vertical line segment path between $u$ and $v$ with respect to $M$. 
\end{enumerate}
\end{theorem}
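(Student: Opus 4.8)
The plan is to verify the two defining properties of a $k$-minimal in turn: (a) that $M$ is a resolving set, and (b) that no proper subset of $M$ is resolving (equivalently, since resolvability is monotone under inclusion, that $M\setminus\{w\}$ fails to resolve for every single $w\in M$).

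For (a) I would invoke Theorem~\ref{localthm}, so that it suffices to show every vertex of the grid has a locally resolved neighbourhood with respect to $M$. Corner vertices are handled at once: condition (ii) supplies two boundary vertices on opposite sides, and the argument in the proof of Proposition~\ref{opprop} then shows every corner is locally resolved. The real work is with side and interior vertices, where I would use the weak characterisation in terms of Side Cases (1)--(2) and Interior Cases (1)--(3) together with the geometry of the minimal line segment path. Assume without loss of generality that $u$ and $v$ lie on opposite horizontal sides and that $\Pi$ is a minimal horizontal line segment path from $u$ to $v$. Since $\Pi$ is a shortest path between vertices on opposite horizontal sides, it is a monotone ``staircase'': its vertical coordinate increases steadily from that of $u$ to that of $v$, its horizontal coordinate moves monotonically between the horizontal coordinates of $u$ and $v$, and it alternates vertical stretches along columns with horizontal line segments whose endpoints are landmarks of $M$. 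Now fix a side or interior vertex $s$, place the coordinate axes at $s$ as in Lemmas~\ref{quadlem} and~\ref{quadlem2}, and locate $s$ relative to $\Pi$. Because $\Pi$ meets every horizontal level strictly between those of $u$ and $v$, it meets the row through $s$; tracing $\Pi$ from that meeting point down towards $u$ and up towards $v$ produces landmarks of $M$ (endpoints of horizontal line segments of $\Pi$, or $u$ and $v$ themselves) that lie in distinct quadrants of $s$, or on its quadrant boundaries, exactly as required by the relevant Side or Interior Case. The verification splits into cases according to whether $s$ lies strictly to one side of $\Pi$ in its row, on a column used by a vertical stretch of $\Pi$, or on a horizontal line segment of $\Pi$, and according to the turning direction of $\Pi$ near $s$; the minimality conditions (1)--(6) on the line segment path are used to rule out the degenerate subconfigurations. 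In each case the two landmarks produced match those named in the corresponding case, so $s$ is locally resolved, and Theorem~\ref{localthm} gives that $M$ is resolving.

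For (b), take $w\in M$. If $w\in\{u,v\}$, then by condition (ii) the pair $\{u,v\}$ is the only pair of boundary vertices on opposite sides contained in $M$, so $M\setminus\{w\}$ contains no two boundary vertices on opposite sides and is therefore not resolving by Proposition~\ref{opprop}. If instead $w\in M\setminus\{u,v\}$, then $w$ lies on a horizontal line segment of $\Pi$, and minimality of the line segment path means $M\setminus\{w\}$ admits no horizontal line segment path from $u$ to $v$. I would convert this global failure into a local obstruction: deleting $w$ opens a gap, i.e.\ there is a column $c$ and a level $j^{\ast}$ such that, with the grid vertex $(c,j^{\ast})$ (or one of its neighbours) as origin, every landmark of $M\setminus\{w\}$ lies in a single quadrant, or in a pair of opposite quadrants with all the relevant boundary landmarks on one side; then Lemma~\ref{quadlem} or Lemma~\ref{quadlem2} exhibits two unresolved neighbours of that vertex, and the contrapositive of Theorem~\ref{localthm} shows $M\setminus\{w\}$ is not resolving. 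Combining (a), (b) and $|M|=k$ then yields that $M$ is a $k$-minimal.

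The main obstacle is the second part: turning ``$M\setminus\{w\}$ admits no line segment path'' into an explicit vertex with an unresolved neighbourhood, that is, pinning down precisely which column and level become uncovered when the load-bearing landmark $w$ is removed. I expect conditions (1)--(6) on the minimal line segment path, especially the monotonicity-type condition (6), to be exactly what is needed to control where the break occurs, while the extensive but routine case analysis in part (a) is the other technical burden.
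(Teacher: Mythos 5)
Your overall strategy coincides with the paper's: establish resolvability by showing every vertex has a locally resolved neighbourhood (via Theorem~\ref{localthm} and the Side/Interior Cases), handle the deletion of $u$ or $v$ through condition (ii) and Proposition~\ref{opprop}, and handle the deletion of any other landmark by exhibiting a vertex with two unresolved neighbours. The resolvability half is a reasonable outline of the paper's case analysis, which splits on whether a side vertex lies on the side of $u$ or of $v$, and on whether an interior vertex lies off an $M$-path, on an $M$-path but outside $M$, or in $M$, matching each situation to Side Cases (1)--(2) and Interior Cases (1)--(3). One small correction: once Lemma~\ref{quadlem} or~\ref{quadlem2} hands you two unresolved neighbours, the set fails to resolve directly by the definition of a resolving set; the ``contrapositive of Theorem~\ref{localthm}'' runs in the wrong direction and is not the statement you need.

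The genuine gap is the step you yourself flag as the main obstacle: for $w\in M\setminus\{u,v\}$ you never identify the witness vertex whose neighbourhood becomes unresolved, and that is where essentially all the content of the minimality half lives. The paper does this concretely: since $w$ is an endpoint of a horizontal line segment, there is an $m\in M$ on the same horizontal line as $w$; among the vertices of the segment between $w$ and $m$, it takes $w_1$ to be the first one having a vertex of $M$ directly below it on the same vertical line (when $w$ is to the left of $m$; above it, when $w$ is to the right). Conditions (3) and (6) of the minimal line segment path are then used to show that any landmark sharing a vertical line with this segment and lying above it must be the rightmost vertex of its own segment (and leftmost if it lies below), which forces the north-east and south-west quadrants of $w_1$ to contain no landmarks of $M\setminus\{w\}$, so Lemma~\ref{quadlem2} produces the unresolved pair; a separate short argument handles the case where $w_1$ is a side vertex adjacent to $u$ or $v$. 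Without pinning down $w_1$ and verifying these quadrant conditions, your claim that ``deleting $w$ opens a gap'' at some column and level remains an assertion rather than a proof, so as it stands the minimality direction is incomplete.
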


\begin{proof}
Let $M$ be a set of vertices that satisfies the above conditions. We will assume without loss of generality that the vertices $u$ and $v$ are on horizontal sides of the grid, the side containing $u$ is above the side containing $v$, and $u$ is to the left of $v$. Hence there is a horizontal line segment path between $u$ and $v$ with respect to $M$. We will refer to such paths as $M$-paths. We will prove the resolvability of $M$ by showing that every vertex has a locally resolved neighbourhood with respect to the elements of $M$. Since condition (ii) gives the corners locally resolved neighbourhoods, we need only consider the side and interior vertices.\\
If a side vertex $w$ is not on the same side as $u$ or $v$, then $w$ is in the situation described by Side Case (1) where $u$ and $v$ are in different quadrants. If the side vertex $w$ is on the same line as $u$ (but not $u$), then $u$ is in a quadrant boundary with respect to $w$ as the origin and there is either a vertex below $u$ on the same vertical line, or there are vertices below and at either side of $u$ in order for $u$ to be above a horizontal line segment. In either case, $w$ is in the situation described by Side Case (2)(and similarly if the side vertex is on the same side as $v$). Finally, if the side vertex $w$ is either $u$ or $v$ then it is either in the situation described by Side Case (2) if there is a vertex on the same vertical line as $w$, or it is in Side Case (1) if $w$ is above a horizontal line segment. Hence all the side vertices have locally resolved neighbourhoods.\\
Now consider the interior vertices. If $w$ is an interior vertex that is not in an $M$-path, then it is in the situation described by Interior Case (1) since the horizontal line segment path from $u$ to $v$ would cross the horizontal axis with $w$ as the origin. If $w$ is an interior vertex in an $M$-path but $w \notin M$, then there are 3 cases:
\begin{enumerate}[(1)]
\item $w$ is between two vertices $m_1,m_2 \in M$ on the same horizontal line.
\item $w$ is between a vertex in $m \in M$ and $c \notin M$ that is in a horizontal line segment, where $c$ and $m$ are on the same vertical line.
\item $w$ is between vertices $c_1,c_2 \notin M$ that are in two different horizontal line segments, where $c_1$ and $c_2$ are on the same vertical line.
\end{enumerate}
In Cases (2) and (3), $w$ is in the situation of Interior Case (1) due to $w$ being on a vertical line that intersects a horizontal line segment. In case (1), $w$ is in the situation described by Interior Case (3).\\ 
Finally, if $w$ is an interior vertex in an $M$-path and $w \in M$, then there must be another vertex in $M$ on the same horizontal line as $w$. If the vertical line containing $w$ contains another vertex in $M$, then $w$ is in the situation described by Interior Case (2). Otherwise, $w$ is in the situation described by Interior Case (1). Hence, all interior vertices have locally resolved neighbourhoods with respect to $M$. Therefore, all the vertices in the grid have locally resolved neighbourhoods with respect to the vertices of $M$ and so by Theorem~\ref{localthm}, $M$ is a resolving set.\\
Now we will show that $M$ is a minimal. Suppose we removed a vertex $w$ from $M$. If $w$ was either of the vertices $u$ or $v$, then condition (ii) implies that $M-\{w\}$ would not contain a pair of boundary vertices on opposite sides of the grid so by Proposition~\ref{opprop}, $M-\{w\}$ is not resolving. Now assume that $w$ is neither $u$ nor $v$. We know that there is an $m \in M$ on the same line as $w$. We also know that no vertical line between $w$ and $m$ intersects with two or more other horizontal line segments and that one of the vertical lines intersects with a horizontal line segment that is above $m$ and $w$ and another of these vertical lines intersects with a horizontal line segment that is below $m$ and $w$. Furthermore, we know that one of the vertical lines between $m$ and $w$ that intersects with the line segment above $m$ and $w$ will intersect with the line segment at a vertex in $M$ due to conditions (3) and (6) of a minimal horizontal line segment path. Similarly, we know that one of the vertical lines between $m$ and $w$ that intersects with the line segment below $m$ and $w$ will intersect with the line segment at a vertex in $M$. If $w$ is to the left of $m$, let $w_1$ be the first vertex on horizontal line between $w$ and $m$ for which there is vertex in $M$ that is on the same vertical line and below $w_1$. If $w$ is to the right of $m$, let $w_1$ be the first vertex on horizontal line between $m$ and $w$ for which there is vertex in $M$ that is on the same vertical line and above $w_1$. In either case, if $w_1$ is an interior vertex then it is in the situation described by Lemma~\ref{quadlem2} as there is not a pair of vertices in $M$ on the same vertical line as $w_1$ that are above and below $w_1$, nor is there a pair vertices in $M$ on the same horizontal line as $w_1$ that are to the left and right of $w_1$. Conditions (3) and (6) imply that if there is a vertex in $m_1 \in M$ that is on the same vertical line as any of the vertices on the horizontal line segment between $w$ and $m$, then if $m_1$ is above $w$ and $m$, it must be the rightmost vertex on a horizontal line segment, otherwise it must be the leftmost vertex in a horizontal line segment. This implies that there are no vertices in the north-east or south-west quadrants with respect to $w_1$ as the origin, hence by Lemma~\ref{quadlem2}, $w_1$ does not have a locally resolved neighbourhood.
If $w_1$ were a side vertex, then $w_1 \in M$ and it would either be on the same side and below $u$, where $u$ is a corner vertex, or it would be on the same side and above $v$, where $v$ is a corner vertex. In the case $w_1$ is on the same side as $u$, then there are no other vertices in $M$ on the same horizontal line as $w_1$ and $u$ is the only vertex on the vertical line as $w_1$. Condition (6) implies that there are no vertices in the north-east quadrant with respect to $w_1$ as the origin and hence $w_1$ is not in the situation described by either Side Case (1) or Side Case (2) and thus does not have a locally resolved neighbourhood. In the case $w_1$ is on the same side as $v$, then there are no other vertices in $M$ on the same horizontal line as $w_1$ and $v$ is the only vertex on the vertical line as $w_1$. Condition (6) implies that there are no vertices in the south-west quadrant with respect to $w_1$ and so as in the previous case, $w_1$ does not have a locally resolved neighbourhood.\\
Therefore $M-\{w\}$ is not a resolving set for the grid.
\end{proof}

\begin{figure}[H]
        \centering
        \includegraphics[width=0.5\textwidth]{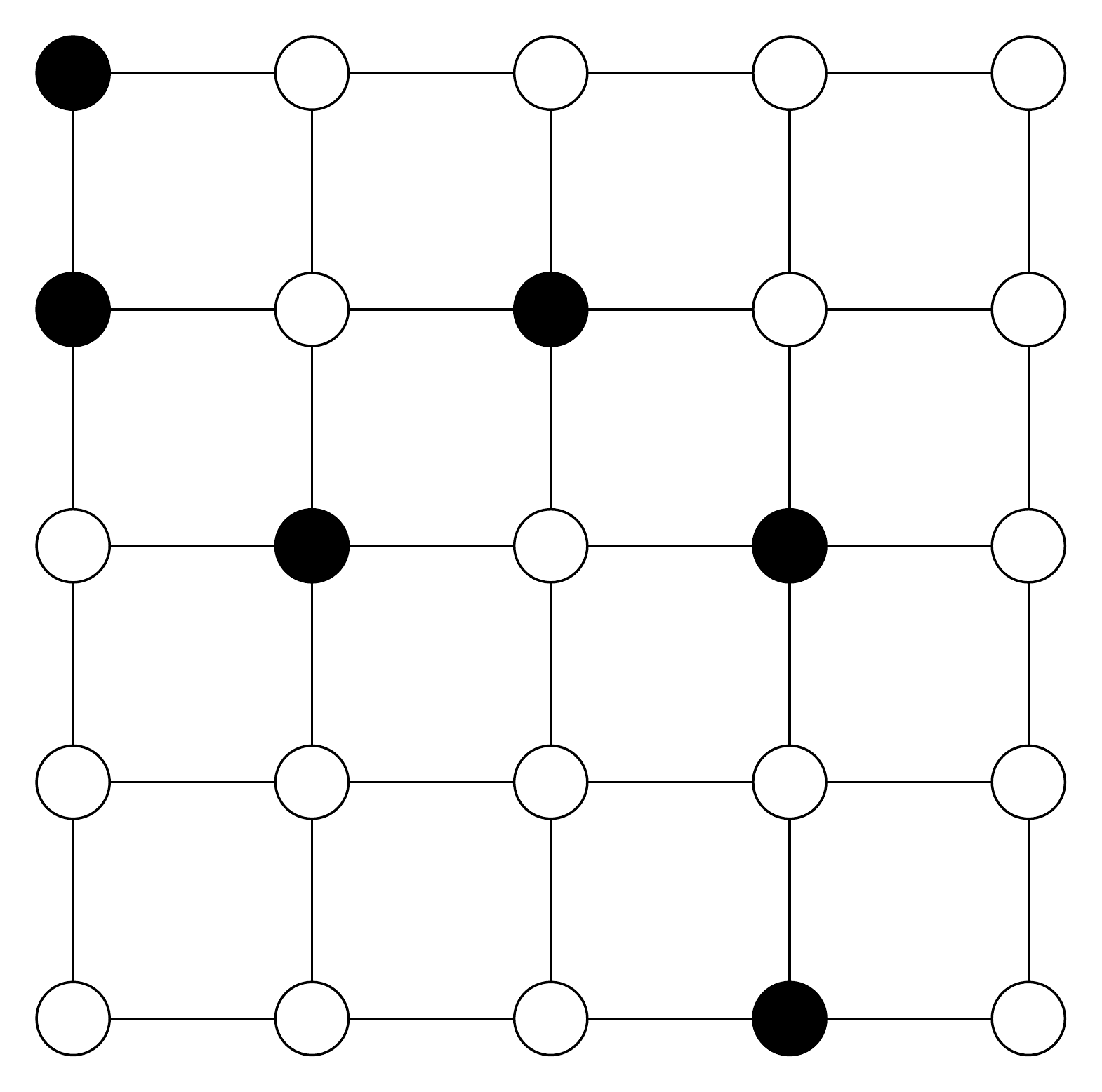}
        \caption{An example of a minimal described by Theorem~\ref{segementthm}.}
\end{figure}

It is important to note that the class of minimals described by Theorem~\ref{segementthm} does not include every $k$-minimal where $k>3$. Below we have an example of a 4-minimal that is clearly not in this class of minimals as no element in the minimal is on the same line as any other element in the minimal. The resolvability and minimality of this set of landmarks was verified by computer using an integer programming formulation of the problem. See appendix~\ref{sec:integer} for details.\\

\begin{figure}[H]
        \centering
        \includegraphics[width=0.5\textwidth]{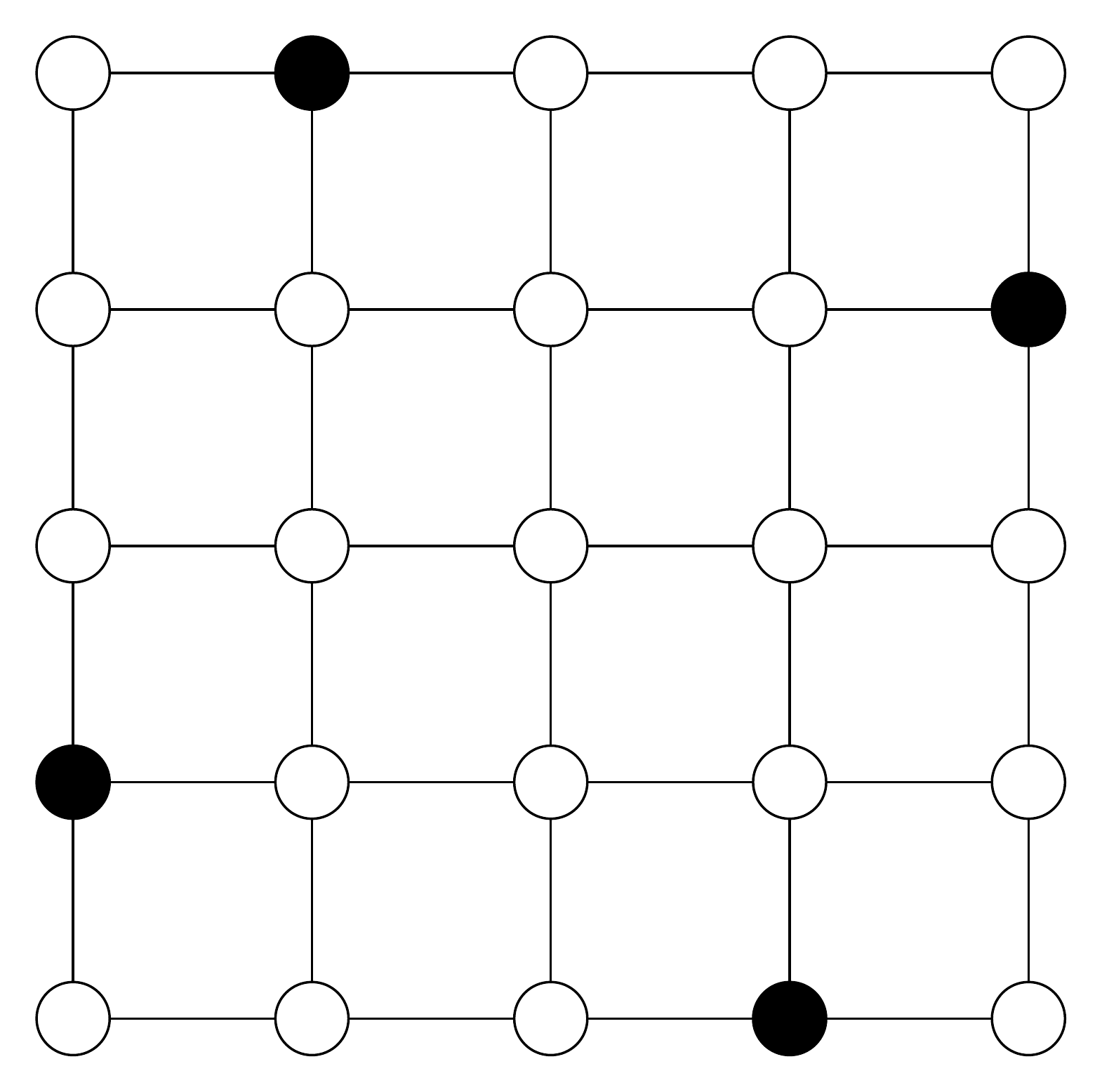}
        \caption{A 4-minimal for the grid $P_5 \square P_5$.}
\end{figure}

We would now like to determine the largest cardinality of any minimal for the grid $P_n \square P_m$, where $3 \leq n \leq m$. The following result can be used to give an upper bound for this number.

\begin{lemma} \label{3line}
No minimal for the grid contains three vertices that are on the same line.
\end{lemma}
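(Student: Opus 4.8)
The plan is to suppose, for contradiction, that a minimal $M$ contains three vertices on the same line, and show that one of them is redundant, i.e. that removing it leaves a resolving set. Without loss of generality, by rotating the grid, these three collinear vertices lie on a horizontal line $y=j$; call them $a=(i_1,j)$, $b=(i_2,j)$, $c=(i_3,j)$ with $i_1<i_2<i_3$. The claim I would aim for is that $M-\{b\}$ is still a resolving set, so that $M$ was not minimal — a contradiction. (I expect the middle vertex to be the one that can always be discarded; the intuition is that the two outer vertices $a$ and $c$ already dominate the resolving power of the whole line.)

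To prove $M-\{b\}$ is resolving I would invoke Theorem~\ref{localthm}: it suffices to check that every vertex still has a locally resolved neighbourhood with respect to $M-\{b\}$. So I would walk through each vertex $w$ of the grid and examine the effect of deleting $b$ from its local picture, using the quadrant language of Lemmas~\ref{quadlem} and~\ref{quadlem2} and the Side/Interior case analysis developed just before Theorem~\ref{segementthm}. The key observation is that, with $w$ as origin, the horizontal line $y=j$ is either the $x$-axis through $w$, or lies entirely in the two quadrants on one side of that axis (above or below $w$). In the first case ($w$ on the line $y=j$), the vertices $a,b,c$ all lie on a quadrant boundary of $w$, and since $a$ and $c$ are on opposite sides of $w$ along that boundary as long as $w$ is strictly between $a$ and $c$ — and otherwise two of $a,b,c$ still lie on the same side, with the extreme one surviving — the relevant boundary is still occupied by a point of $M-\{b\}$; so whatever case ($w=u$, $w=v$, a side vertex, an interior vertex) $w$ was placed in by virtue of the line $y=j$, it stays in that case after deleting $b$. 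In the second case ($w$ not on $y=j$), among $a,b,c$ the two that matter for locating landmarks in each of the two quadrants above (or below) $w$ are the leftmost and rightmost, namely $a$ and $c$; the middle vertex $b$ never provides the unique landmark in a quadrant or the unique landmark on a quadrant boundary that $w$ relies on. Hence $w$'s locally resolved neighbourhood is witnessed by the same configuration of landmarks, none of which is $b$.

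The one subtlety I would be careful about is the vertices $w$ lying on the line $y=j$ strictly between $a$ and $b$, or strictly between $b$ and $c$: for such $w$, among $\{a,c\}$ only one lies on each of the two halves of the $x$-axis through $w$, but that is already the case with $b$ present (a point of the $x$-axis to the left, a point to the right), so $w$ was in, and remains in, Side Case (2) or Interior Case (2)/(3) or the corresponding corner situation; I would spell out that the "other" landmark required by those cases — the one off the line $y=j$ that $M$ must contain to make $w$ locally resolved in the first place — is untouched by deleting $b$. I would also note the boundary bookkeeping: since $b$ is not a corner (Proposition~\ref{cornerprop} forbids $M$ from having a second corner once $|M|\ge 3$, and in any case at most one of $a,b,c$ can be a corner and if one is it is $a$ or $c$ since $b$ is strictly interior to the line), deleting $b$ cannot destroy condition~(ii) of Proposition~\ref{opprop}: any opposite-sides pair guaranteed in $M$ either avoided $b$ or, if it used $b$, could be re-routed through $a$ or $c$ on the same side.

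The main obstacle I anticipate is organising the case analysis cleanly so that it is genuinely exhaustive — there are several positions of $w$ relative to the three collinear points and several of the Side/Interior cases, and I want to avoid an unwieldy enumeration. The clean way to do this, which I would adopt, is to argue uniformly: fix any $w$, look at which of the five ``good'' situations (Side Case (1), (2); Interior Case (1), (2), (3)) certifies $w$'s locally resolved neighbourhood under $M$, and check that each such certificate uses at most a property of $\{a,c\}$ (landmarks in a given quadrant or on a given quadrant-boundary half) that is unaffected by whether $b\in M$, together with landmarks of $M$ other than $a,b,c$ which are obviously unaffected. Once that uniform statement is established, the theorem follows immediately from Theorem~\ref{localthm} and Proposition~\ref{opprop}, giving the desired contradiction with minimality of $M$.
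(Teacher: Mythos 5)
Your proposal is correct in substance but takes a genuinely different route from the paper's. The paper argues directly and globally: writing the three collinear landmarks as $u$, $w$, $v$ with $w$ in the middle, it shows that \emph{every} pair of vertices (not merely pairs of neighbours) left unresolved by the two outer landmarks is also unresolved by the middle one, by splitting such pairs into three location classes and routing a shortest path from $w$ through $u$, through $v$, or through the point of the segment lying on the pair's common row; minimality is contradicted immediately, with no appeal to Theorem~\ref{localthm}. You instead prove only \emph{local} domination --- that deleting the middle landmark $b$ preserves every vertex's locally resolved neighbourhood --- and then invoke Theorem~\ref{localthm} to globalise. Your central substitution claim is true and is the real content of your argument: for any vertex and any pair of its neighbours, if $b$ resolves the pair then so does $a$ or $c$ (this can be checked directly for each of the six neighbour-pair types using the Manhattan metric, since $a$ and $c$ flank $b$ on the same row), and in the one configuration where $b$ sits on a vertical quadrant boundary of the origin while neither $a$ nor $c$ does, the pair $\{a,c\}$ occupies the two adjacent quadrants and supplies Side/Interior Case (1) outright. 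The one soft spot is that you route the argument through ``which of the five good situations certifies $w$,'' which presupposes that the Side/Interior taxonomy is exhaustive for locally resolved neighbourhoods --- something the paper establishes only informally in its ``weak characterisation'' discussion; the direct neighbour-pair computation sidesteps that dependency and is what you should actually write down. Net comparison: the paper's proof is shorter, elementary and self-contained, and yields the stronger fact that the middle landmark contributes nothing to resolving any pair whatsoever; yours is heavier machinery but fits naturally into the local-certificate framework used for Theorem~\ref{segementthm}, and its substitution observation ($b$ is locally dominated by $\{a,c\}$) is independently reusable.
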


\begin{proof}
Suppose we have three distinct vertices $u$, $v$ and $w$ on the same line in the grid, where $w$ is between $u$ and $v$. We will assume without loss of generality that $u$ and $v$ form a vertical line segment and that $u$ is above $v$. Now suppose there are two vertices $p$ and $q$ that are not resolved by $u$ or $v$. There are three possible cases for the location of $p$ and $q$:
\begin{enumerate}[(1)]
\item $p$ and $q$ are on the same horizontal line that intersects the vertical line segment between $u$ and $v$.
\item Both $p$ and $q$ are above $u$.
\item Both $p$ and $q$ are below $v$.
\end{enumerate}
For the first case, let $r$ be the vertex in the vertical line segment between $u$ and $v$ that is on the same horizontal line as $p$ and $q$. Since $r$ does not resolve $p$ or $q$ and since there is a shortest path from $w$ to $p$ that goes through $r$ and a shortest path from $w$ to $q$ that goes through $r$ if $w \neq r$, $w$ does not resolve $p$ and $q$. For the second case, there is a shortest path from $w$ to $p$ that goes through $u$ and a shortest path from $w$ to $q$ that goes through $u$, and so $w$ does not resolve $p$ and $q$. Finally, for the third case  there is a shortest path from $w$ to $p$ that goes through $v$ and a shortest path from $w$ to $q$ that goes through $v$, and thus any pair of vertices $p$ and $q$ that are not resolved by $u$ or $v$, will not be resolved by $w$. Thus, if $u,v$ and $w$ were in a resolving set $R$, $R-\{w\}$ would still be resolving.
\end{proof}

The corollary of this result is that no minimal will have more than $2n$ vertices. It turns that we can provide an even better upper bound.

\begin{theorem}
For the grid $P_n \square P_m$, where $3 \leq n \leq m$, the cardinality of the largest minimal is $2n-2$.
\end{theorem}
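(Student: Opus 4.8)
The bound $|M|\le 2n$ is exactly the corollary of Lemma~\ref{3line} noted just above the statement: each of the $n$ vertical lines of the grid meets a minimal $M$ in at most two vertices. So the task splits into two parts: (a) ruling out $|M|=2n$ and $|M|=2n-1$, and (b) exhibiting a minimal of cardinality $2n-2$.

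For part (a) I would argue by contradiction. Suppose $M$ is a minimal with $|M|\ge 2n-1$. By Lemma~\ref{3line} no vertical line carries three vertices of $M$, so the hypothesis forces every vertical line to be non-empty and at least $n-1$ of the $n$ vertical lines to carry exactly two vertices of $M$; call these the \emph{full} columns. The corners cannot be the source of a contradiction here, because any resolving set satisfies Proposition~\ref{opprop}, which already guarantees that the four corners have locally resolved neighbourhoods. Hence the contradiction must be with \emph{minimality}, and the plan is to show that so many landmarks must contain a redundant one. By Proposition~\ref{cornerprop} at most one vertex of $M$ is a corner, and by Proposition~\ref{opprop} exactly one pair $u,v\in M$ lies on opposite sides; since $u$, $v$ and the possible corner occupy at most three columns, once $n$ is large enough there is a full column $c$ neither of whose two vertices $(c,y_1),(c,y_2)$ (with $y_1<y_2$) is $u$, $v$, or a corner. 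Using the weak characterisation developed above (Side Cases~(1)--(2) and Interior Cases~(1)--(3)), I would verify that $M-\{(c,y_2)\}$ still gives every side vertex and every interior vertex a locally resolved neighbourhood: the surviving vertex $(c,y_1)$ together with the full columns flanking $c$ on either side always supply, for any grid vertex taken as origin, landmarks in two adjacent quadrants, or on the quadrant boundaries required by the relevant case. Theorem~\ref{localthm} then shows $M-\{(c,y_2)\}$ is resolving, contradicting minimality; this rules out $|M|=2n-1$ and $|M|=2n$. The few small grids not covered by ``$n$ large enough'' form a finite list and can be checked directly, as is done for the integer‑programming verification mentioned elsewhere in the paper.

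I expect part (a) to be the crux, and the delicate point is precisely the bookkeeping in the redundancy step: one must show that deleting one landmark from a full interior column never destroys the \emph{last} witness of some vertex's Side or Interior Case, which requires a case split on the position of the grid vertex relative to the column $c$ and to the pair $u,v$. Part (b) is comparatively routine. Consider
\[
M=\{(0,m-1)\}\ \cup\ \bigcup_{j=1}^{n-2}\bigl\{(j-1,\,m-1-j),\ (j,\,m-1-j)\bigr\}\ \cup\ \{(n-2,0)\},
\]
which has $2+2(n-2)=2n-2$ vertices. Its only corner is $(0,m-1)$; apart from $(0,m-1)$ (on the top side) and $(n-2,0)$ (on the bottom side), its only boundary vertex is $(0,m-2)$ (the $j=1$ vertex, on the left side), and since column $n-1$ is empty there is no other pair of vertices on opposite sides, while $(0,m-1)$ and $(n-2,0)$ lie on opposite horizontal sides and not on a common line. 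The staircase
\[
(0,m-1)\to(0,m-2)\to(1,m-2)\to(1,m-3)\to(2,m-3)\to\cdots\to(n-2,m-n+1)\to\cdots\to(n-2,0)
\]
is a shortest path from $(0,m-1)$ to $(n-2,0)$ built from the horizontal segments of $M$ (each spanning two consecutive columns) and the vertical lines meeting them; deleting any vertex of $M$ other than the two end vertices breaks it irreparably, because consecutive segment spans overlap in a single column only, while deleting an end vertex violates Proposition~\ref{opprop}. Hence it is a minimal horizontal line segment path, conditions (i)--(iii) of Theorem~\ref{segementthm} hold, and $M$ is a $(2n-2)$-minimal. Combining parts (a) and (b), the cardinality of the largest minimal is exactly $2n-2$.
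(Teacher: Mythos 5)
Your part (b) is correct and is in fact the same staircase construction the paper itself uses, verified against Theorem~\ref{segementthm} in essentially the same way, so there is nothing to add there. The problem is part (a), which you yourself identify as the crux and then do not carry out. The entire content of the upper bound is the claim that deleting the top vertex of a suitably chosen full column $c$ leaves every vertex locally resolved; you assert that the surviving vertex $(c,y_1)$ together with the flanking full columns ``always supply \dots landmarks in two adjacent quadrants,'' but this is precisely the statement that needs a case analysis, and it is not obviously true for an \emph{arbitrary} interior full column: for a vertex $w$ on the same row as $(c,y_2)$, the deleted landmark sat on a quadrant boundary of $w$ and may have been the witness for Interior Case~(3), and nothing you have said produces a replacement. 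The paper avoids this by choosing the deleted vertices much more carefully: with $2n$ landmarks and two per row there are two landmarks $u_1,u_2$ on the top side and two landmarks $v_1,v_2$ on the bottom side; after disposing of the case in which the set already contains a $3$-minimal, the paper shows that $u_2$ and $v_2$ are the redundant pair, the point being that $u_1$ and $v_1$ alone are opposite corners of a subgrid and already supply the required quadrant configuration for every vertex type. That specific structural fact is what makes the local-resolvedness verification go through, and your generic choice of $c$ does not have it.

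There are two further concrete errors in part (a). First, ``by Proposition~\ref{opprop} exactly one pair $u,v\in M$ lies on opposite sides'' is false: Proposition~\ref{opprop} guarantees \emph{at least} one such pair, and a set with two landmarks in every row necessarily has two landmarks on each of two opposite sides, hence several opposite pairs; this undermines the counting step by which you locate the column $c$. Second, the fallback that ``the few small grids not covered by `$n$ large enough' form a finite list and can be checked directly'' is wrong, because for each fixed small $n$ the parameter $m$ is unbounded, so the exceptional family $P_3\square P_m, P_4\square P_m,\dots$ is infinite and cannot be disposed of by computer. (You are right that $|M|=2n-1$ needs separate attention, since then one row carries only one landmark and the ``two on each of the top and bottom sides'' structure is not automatic; the paper's own proof is silent on this case, but your version inherits the issue without resolving it.)
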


\begin{proof}
Suppose we have a grid $P_n \square P_m$, where $3 \leq n \leq m$, and a set $R$ that contains $2n$ vertices, where $R$ does not have three vertices that are on the same line. If we rotate the grid so that the north and south sides are of length $m$, then we have two vertices in every row. This implies that we have two vertices, $u_1$ and $u_2$, on one horizontal side of the grid, and two vertices $v_1$ and $v_2$ that are on the opposite side. If either $u_1$ or $u_2$ are on the same vertical line as any vertex in the horizontal line segment between $v_1$ or $v_2$, or if  $v_1$ or $v_2$ are on the same vertical line as any vertex in the horizontal line segment between $u_1$ or $u_2$, then $R$ is the superset of a 3-minimal and we could remove $2n-3$ vertices from $R$ and still have a resolving set. Assume that neither $u_1$ or $u_2$ are on the same vertical line as a vertex in the line segment between $v_1$ and $v_2$, and assume that neither $v_1$ or $v_2$ are on the same vertical line as a vertex in the line segment between $u_1$ and $u_2$. Hence we will assume without loss of generality that if the horizontal coordinates of $u_1,u_2,v_1$ and $v_2$ are $p_1,p_2,q_1,q_2$, then $p_1<p_2<q_1<q_2$. The vertices $u_1$ and $v_1$ form opposite corners of a subgrid. Let the vertices in this subgrid be in the set $A$, and let all other vertices in the grid be in the set $B$. We will now show that the two vertices $u_2$ and $v_2$ do not affect the resolvability of $R$.\\ Clearly all of the corners of this grid are locally resolved by $u_1$ and $v_1$ since they are on opposite sides of the grid. If a vertex is a side vertex that is not on a side containing $u_1$ or $v_1$ then it is in the situation of Side Case (1) where $u_1$ and $v_1$ are in adjacent quadrants. Any interior vertex in $B$ is in the situation of Interior Case (1) where $u_1$ and $v_1$ are again in adjacent quadrants. A side vertex in $B$ that is on the same side as $u_1$ is in the situation of Side Case (2) where $u_1$ is in a quadrant boundary and $v_1$ is in a quadrant that has the quadrant boundary that contains $u_1$. Similarly, a side vertex in $B$ that is on the same side as $v_1$ is in the situation of Side Case (2) as it is locally resolved by $u_1$ and $v_1$. A side vertex in $A$ that is on the same side as $u$ may not be locally resolved as the quadrant boundary that contains $u_1$ and $u_2$ (or just $u_2$ if $u_1$ is the origin) is not a boundary of the quadrant that contains $v_1$ and $v_2$. If we had an element of $R$ that was in the adjacent quadrant to the quadrant containing $v_1$, then the side vertex would be locally resolved, both with or without $u_2$ and $v_2$. Similarly, the resolvability of the local neighbourhood of a side vertex in $A$ that is on the same side as $v_1$ does not depend on $u_2$ or $v_2$. Finally, an interior vertex in $A$ has the pair $u_1$ and $u_2$ in one quadrant and $v_1$ and $v_2$ in an opposite quadrant with respect to the interior vertex as the origin. It is clear from Lemma~\ref{quadlem2} that the resolvability of the local neighbourhood of this vertex does not depend on $u_2$ or $v_2$. Thus, if $R$ were a resolving set, then $R-\{u_2,v_2\}$ would be a resolving set and therefore no minimal has a cardinality greater than $2n-2$.\\
We show the existence of a minimal of cardinality $2n-2$ by the following construction:
Given the grid $P_n \square P_m$, where $3 \leq n \leq m$, rotate the grid so that the north and south sides are of length $n$. Let $u$ be the north-west corner of the grid and let $v$ be the left neighbour of the south-east corner of the grid. Start from $u=p_1$ and a form path $P= p_1,p_2,p_3,\dots,p_{2n-3}$, where $p_{i+1}$ is the neighbour below $p_i$ if $i$ is odd, else $p_{i+1}$ is the neighbour to the right of $p_i$. Note that the path moves right $n-2$ times, hence $p_{2n-3}$ is on the same vertical line as $v$. Let $M=\{P\} \cup \{v\}$, where $\{ P \}$ denotes the set of vertices in $P$. Then by Theorem~\ref{segementthm}, $M$ is a minimal since the horizontal line segment path from $u$ to $v$ is unique and traverses every vertex in $M$. Since $|M| = 2n-2$, $M$ is a $(2n-2)$-minimal. 
\end{proof}

\begin{figure}[H]
        \centering
        \includegraphics[width=0.5\textwidth]{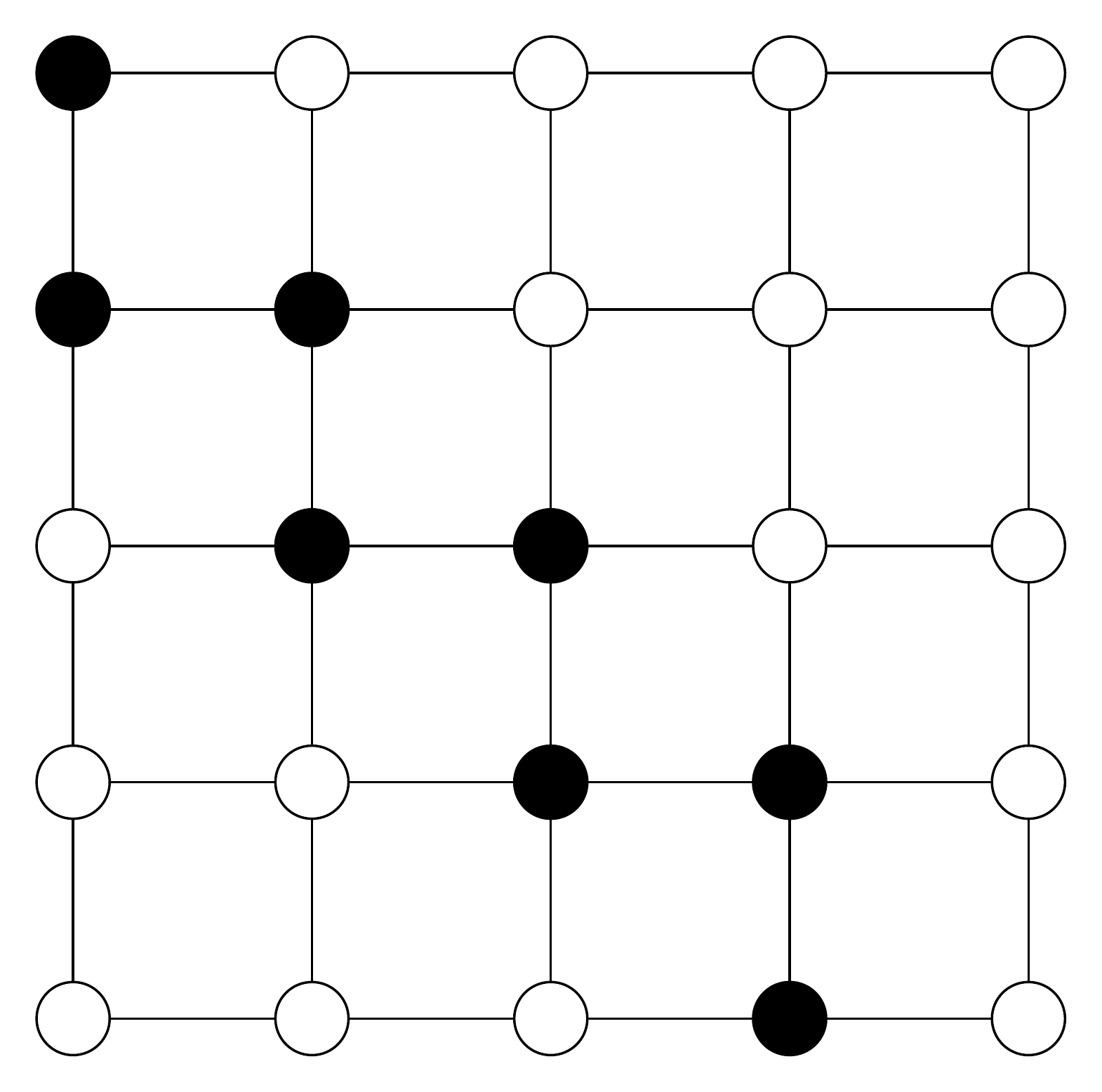}
        \caption{An 8-minimal for the grid $P_5 \square P_5$.}
\end{figure}

The $(2n-2)$-minimal in the above proof was formed by taking a shortest path from two boundary vertices, $u$ and $v$, on opposite sides of the grid and letting the minimal contain $u$,$v$ and the corners of the shortest path, where a corner of the path $v_1,v_2,\dots ,v_k$ is any vertex $v_i$ such that $v_{i-1}$ and $v_{i+1}$ are on different lines of the grid. By choosing an appropriate $u$ and $v$ and an appropriate shortest path between them, it is possible to produce a $k$-minimal in this fashion, for any $k$ such that $3 \leq k \leq 2n-2$. 

\section{Conclusion}
In this paper, we have provided a complete characterisation of 2-minimals and 3-minimals and have shown that $k$-minimals exist if and only if $2 \leq k \leq 2n-2$. We have also provided a characterisation of a class of $k$-minimals and a weak characterisation of resolving sets of the grid which may be used in the future to find more classes of $k$-minimals.  As future work, we wish to give a complete characterisation and enumeration of all the minimals of the grid. The results thus far would suggest that even after a complete characterisation, an enumeration of all the minimals would be difficult, so in the case that no polynomial time algorithm to solve the minimum weight resolving set problem for the grid can be found, we would like to develop a suitable heuristic algorithm for this problem on grid graphs. We would also eventually like to expand the scope of our investigation to include other grid-like graphs, namely cylinders ($P_n \square C_m$) and toruses ($C_n \square C_m$). In the case of the torus, we conjecture that all the minimals for vertex transitive graphs are of the same cardinality which, if true, would imply that no investigation into the torus is needed since it is vertex transitive and its metric dimension is known. Proving this conjecture is also possible future work and due to the vast number of graphs for which metric dimension is known, this would be a very powerful result. 

\bibliographystyle{apalike}
\bibliography{ref}

\appendix
\appendixpage
\section{Integer Programming Formulation}\label{sec:integer}
This formulation is a modification of the integer programming formulation developed by Chartrand et al. \citep{chartrand2000} to solve metric dimension problems, where we have included weights in the objective. Given a graph $G$, let $V = \{v_1,v_2, \dots ,v_n\}$ be the set of vertices and let $\{w_1,w_2,\dots w_n\}$ be the corresponding weights. For each vertex $v_i$, let $x_i$ be the binary decision variable that is 1 if the vertex $v_i$ is in the resolving set and 0 otherwise. We assume that the distances  $d(v_i,v_j)$ for each pair $v_i,v_j \in V$ have been precomputed, which can be done in polynomial time using an appropriate algorithm such as Floyd's algorithm \cite{floyd}. Our formulation is as follows.

\begin{alignat*}{2}
\text{min}\ & \sum_{i = 1}^n  w_i x_i \\
\text{s.t.}\ & \sum_{k =1}^n |d(v_i,v_k) - d(v_j,v_k)|x_k > 0  \quad  \forall 1 \leq i <j \leq n \\
& x_i\ \text{integer} \quad  \forall i \in \{1,2 \dots n\} 
\end{alignat*}

Note that if all vertices have unit weight, then our formulation will find a metric basis as the solution. In order to use the integer program to see if a set of vertices $\{u_1,u_2, \dots u_m \} \subseteq V$ is a minimal, we give each vertex in our set weight 1, and we give every other vertex weight $M$, where $M>m$. If the integer program gives an objective value of $m$, then we know our set is a minimal. If the objective value is less than $m$, then our set is resolving but not minimal. If the objective value is greater than $m$, then our set is not resolving.

\end{document}